\documentclass[11pt]{article}

\usepackage[a4paper,margin=1.15in]{geometry}
\usepackage{amsmath,amssymb,amsthm,mathtools}
\usepackage{booktabs}
\usepackage{microtype}
\usepackage[numbers,sort&compress]{natbib}
\usepackage{xcolor}
\usepackage{hyperref}

\hypersetup{
  colorlinks=true,
  linkcolor=blue!55!black,
  citecolor=green!40!black,
  urlcolor=blue!55!black,
  pdfauthor={Samuel Korsky},
  pdftitle={Arithmetic Progression-Free Subset-Sum Sets}
}
\allowdisplaybreaks
\numberwithin{equation}{section}

\newtheorem{theorem}{Theorem}[section]
\newtheorem{lemma}[theorem]{Lemma}
\newtheorem{proposition}[theorem]{Proposition}
\newtheorem{corollary}[theorem]{Corollary}

\theoremstyle{definition}

\theoremstyle{remark}
\newtheorem{remark}[theorem]{Remark}

\newcommand{\N}{\mathbb{N}}
\newcommand{\Z}{\mathbb{Z}}
\newcommand{\F}{\mathbb{F}}
\newcommand{\eps}{\varepsilon}
\newcommand{\bw}{\operatorname{bw}}
\newcommand{\ceil}[1]{\left\lceil #1\right\rceil}
\newcommand{\floor}[1]{\left\lfloor #1\right\rfloor}

\title{\textbf{Arithmetic Progression-Free Subset-Sum Sets}}
\author{Samuel Korsky}
\date{June 22, 2026}

\begin{document}

\maketitle

\begin{abstract}
\noindent
For a finite set $A$ of positive integers, let $H(A)$ be its set of subset sums, including the empty sum, and let $g_k(n)$ be the least $N$ for which some $n$-element set $A\subseteq[N]$ has $H(A)$ free of nonconstant $k$-term arithmetic progressions. The problem of determining $g_k(n)$ was posed by Erd\H{o}s and S\'ark\H{o}zy. In the three-term case, we prove a lower bound equal to the exact bandwidth of the ternary grid. If $T_m=[x^m](1+x+x^2)^m$ is the central trinomial coefficient, then
\[
g_3(n)\ge \frac{T_n-1}{2}+\sum_{j=0}^{n-1}T_j
 =\left(\frac{\sqrt{3}}{2\sqrt{\pi}}+o(1)\right)\frac{3^n}{\sqrt{n}}.
\]
For general $k \ge 4$ we show
\[
g_k(n)\gg_k \left(\frac{k-1}{k-2}\right)^n
n^{-\log_2((k-1)/(k-2))}
\]
In the opposite direction, a carry-free digit construction based on nearly-regular graphs gives
\[
\limsup_{n\to\infty}g_k(n)^{1/n}
 \le \min_{p\ \mathrm{prime},\ p\ge3}p^{2/(\min\{p,k\}-1)}.
\]
Consequently, as $k\to\infty$, the logarithm of the lower exponential rate is at least $(1+o(1))/k$, while the logarithm of the upper exponential rate is at most $(2+o(1))\log k/k$.
\end{abstract}

\section{Introduction}

For a positive integer $N$, write $[N]=\{1,\ldots,N\}$. If $A=\{a_1,\ldots,a_n\}$ is a finite set of positive integers, its subset-sum set is
\begin{equation}\label{eq:subset-sum-set}
H(A)=\left\{\sum_{i=1}^n\eps_i a_i:\eps_i\in\{0,1\}\right\}.
\end{equation}
We include the empty sum $0$. A $k$-term arithmetic progression is a set of the form $\{x,x+d,\ldots,x+(k-1)d\}$, and it is nonconstant when $d\ne0$. Following Erd\H{o}s and S\'ark\H{o}zy, define $g_k(n)$ to be the least $N$ such that there is an $n$-element set $A\subseteq[N]$ for which $H(A)$ contains no nonconstant $k$-term arithmetic progression. All unadorned logarithms are natural, and constants implicit in $\ll_k$, $\gg_k$, and $O_k(\cdot)$ may depend on $k$.

\bigskip
\noindent
The problem asks how economically one can place $n$ distinct positive generators while forcing all of their subset sums to avoid a fixed progression length. There are two competing mechanisms. If the subset sums collide heavily, then $H(A)$ can be small enough to fit into a short interval, but additive collisions tend to create progressions. At the other extreme, digit constructions make progression-freeness transparent but usually spread the generators over an exponentially long interval. The function $g_k(n)$ measures the optimal compromise.

\bigskip
\noindent
Erd\H{o}s and S\'ark\H{o}zy proved that $g_3(n)\gg3^n/n^{O(1)}$ and asked in particular whether $g_3(n)\gg3^n$; the problem remains open in that form \cite{ErdosSarkozy1992,Bloom817}. Our first result sharpens the known polynomial factor and gives an exact finite lower bound.

\bigskip
\noindent
For $m\ge0$, let
\begin{equation}\label{eq:central-trinomial}
T_m=[x^m](1+x+x^2)^m
\end{equation}
be the $m$th central trinomial coefficient.

\begin{theorem}\label{thm:main-k3}
For every $n\ge1$,
\begin{equation}\label{eq:main-k3-finite}
g_3(n)\ge b_n:=\frac{T_n-1}{2}+\sum_{j=0}^{n-1}T_j.
\end{equation}
Consequently,
\begin{equation}\label{eq:main-k3-asymptotic}
g_3(n)\ge \left(\frac{\sqrt{3}}{2\sqrt{\pi}}+o(1)\right)\frac{3^n}{\sqrt{n}}.
\end{equation}
\end{theorem}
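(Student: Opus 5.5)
The plan is to reduce the statement to a bandwidth bound for the ternary grid. Let $A=\{a_1,\dots,a_n\}\subseteq[N]$ with $H(A)$ free of nonconstant $3$-term progressions. Consider the map
\[
\varphi:\{0,1,2\}^n\to\Z,\qquad \varphi(u)=\sum_{i=1}^n u_ia_i .
\]
Let $G_n$ be the ternary grid: vertices $\{0,1,2\}^n$, with $u\sim u+e_i$ whenever both lie in the cube. For such an edge, $|\varphi(u+e_i)-\varphi(u)|=a_i\le N$. So if $\varphi$ is injective, it is a labelling of $G_n$ by distinct integers with every edge stretch at most $N$, and therefore $N\ge \bw(G_n)$. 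The theorem then follows from two facts: $\varphi$ is injective, and $\bw(G_n)\ge b_n$ (in fact with equality).

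\emph{Injectivity.} Suppose $\varphi(u)=\varphi(v)$ with $u\ne v$. Among all nonzero $d\in\{-2,\dots,2\}^n$ with $\sum d_ia_i=0$, choose one with minimal $\max_i|d_i|$. If every entry lies in $\{0,\pm2\}$, then $d/2$ is a smaller witness, so some $|d_i|=1$. We now build $x,y,z\in\{0,1\}^n$ with $x_i+y_i-2z_i=d_i$ coordinatewise:
\begin{itemize}
\item $d_i=2$: take $(x_i,y_i,z_i)=(1,1,0)$;
\item $d_i=-2$: take $(0,0,1)$;
\item $d_i=0$: take all zeros;
\item $d_i=1$: take $(1,0,0)$ or $(0,1,0)$;
\item $d_i=-1$: take $(1,0,1)$ or $(0,1,1)$.
\end{itemize}
Then $s(x)+s(y)=2s(z)$, where $s(w)=\sum w_ia_i\in H(A)$. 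The difference $s(x)-s(y)$ is a signed sum $\sum \pm a_i$ over the coordinates with $|d_i|=1$. Flipping the orientation at one such coordinate $i$ changes this difference by $2a_i\ne0$, so some orientation gives $s(x)\ne s(y)$. Then $s(x),s(z),s(y)$ is a nonconstant $3$-AP in $H(A)$, a contradiction.

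\emph{Bandwidth of the ternary grid.} I would use the standard isoperimetric lower bound. For any bijective labelling $f:V(G_n)\to\{1,\dots,3^n\}$ with bandwidth $B$, the first $m$ labelled vertices $S$ have their whole vertex boundary inside the next $B$ labels. Hence
\[
B\ge \max_m\ \min_{|S|=m}|\partial S| .
\]
This bound applies equally to labellings by arbitrary distinct integers after compressing to consecutive integers, since compression does not increase any stretch.

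The vertex-isoperimetric problem on $[3]^n$ is solved by the Bollob\'as--Leader / Harper simplicial-type order: the optimal sets are Hamming-type balls around a corner, ordered by coordinate sum. Evaluating the boundary at the right $m$, namely the half-way level around $|u|=n$, should produce exactly $\frac{T_n-1}{2}+\sum_{j<n}T_j$. The level sizes of $\{0,1,2\}^n$ by coordinate sum are the trinomial coefficients, and the middle one is $T_n$. The sum $\sum_{j<n}T_j$ should come from boundary contributions of the lower-dimensional central slices. This exact evaluation, and confirming that the optimum occurs at that $m$, is where I expect the main difficulty. I would first try induction on $n$, peeling off one coordinate of the grid.

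\emph{Asymptotics.} Use
\[
T_n\sim \frac{3^{n+1/2}}{2\sqrt{\pi n}} .
\]
Since $T_{j}/T_{j+1}\to 1/3$, the tail sum satisfies $\sum_{j<n}T_j\sim T_n/2$. Hence $b_n\sim T_n$, which gives \eqref{eq:main-k3-asymptotic}.
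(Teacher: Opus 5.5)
\textbf{Where this stands.} Your reduction to the bandwidth of the ternary grid matches the paper's, and your asymptotic step is correct. Your injectivity argument is a valid variant of the paper's. You take a witness $d$ with some $|d_i|=1$ and choose $(x_i,y_i)=(1,0)$ or $(0,1)$ at that coordinate so that $s(x)\ne s(y)$. This replaces the paper's separate treatment of the degenerate case $x=y=z$, which it handles with disjoint index sets $P,Q$ of equal sum and the progression $0,s,2s$.

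\textbf{The gap.} You do not prove $\bw(G_n)\ge b_n$, which carries all the quantitative content of the finite bound; you only say the isoperimetric evaluation ``should'' give it. A level cut does not give it. At $m=\sum_{j<n}[x^j](1+x+x^2)^n$, the Bollob\'as--Leader extremal set is the ball $\{|u|\le n-1\}$. Its boundary is the middle level, of size exactly $T_n$. But $T_n<b_n$ for every $n\ge3$; for example $T_3=7<8=b_3$ and $T_4=19<21=b_4$.

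To reach $b_n$ you would have to do three things, for general $n$:
\begin{enumerate}
\item follow initial segments of the simplicial order through a partially filled level;
\item control how fast their upper shadows grow relative to the vertices removed from that level;
\item maximize over all cut points.
\end{enumerate}
Your remark about ``boundary contributions of lower-dimensional central slices'' does none of this. This computation is exactly the content of the Billera--Blanco theorem.

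\textbf{How the paper closes the step.} It cites that theorem: $\bw(Q_n^{(3)})=\sum_{j=0}^{n-1}\mathcal R_j$, where $\mathcal R_j$ is the sum of the two largest coefficients of $(1+x+x^2)^j$ (with $\mathcal R_0=1$). Put $c_j=[x^{j-1}](1+x+x^2)^j$. For $j\ge1$ one has $\mathcal R_j=T_j+c_j$, and $T_{j+1}=T_j+2c_j$, so the sum telescopes to $b_n$.

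\textbf{What your framework yields as written.} Applying Bollob\'as--Leader at the level cut gives $g_3(n)\ge T_n$. Since $b_n\sim T_n$, this already implies the asymptotic bound. It does not give the finite bound $g_3(n)\ge b_n$ unless you add the Billera--Blanco formula or a full proof of it.
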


\smallskip
\noindent
The proof has two conceptual steps. First, $H(A)$ is three-term progression-free if and only if all $3^n$ sums $\sum_i\eps_i a_i$ with $\eps_i\in\{0,1,2\}$ are distinct. Thus $g_3(n)$ is an integer-linear layout problem on the ternary grid $\{0, 1, 2\}^n$. Second, the values of that linear form order the vertices of $\{0,1,2\}^n$, and each grid edge has numerical length at most $\max A$. The exact bandwidth formula of Billera and Blanco \cite{BilleraBlanco2013} then gives \eqref{eq:main-k3-finite}. 

\bigskip
\noindent
For general $k$, we use a growth argument for partial subset-sum sets. Suppose some generators have already been chosen, and let $B$ be the set of subset sums they generate. Adding a new generator $h$ replaces $B$ by $B\cup(B+h)$.  If this union is $k$-term-progression-free, then $B$ cannot contain an $h$-chain (an arithmetic progression with common difference $h$) of length $k-1$.  Hence the elements of $B$ split into $h$-chains of length at most $k-2$, and adding $h$ creates at least one new point in each chain. This gives the universal one-step expansion
\[
|B\cup(B+h)|\ge \frac{k-1}{k-2}\cdot|B|.
\]
Additionally, if $U$ is the set of unused generators, then
\[
\sum_{h\in U}|B\cap(B+h)|\le \binom{|B|}{2},
\]
because each unordered pair of elements of $B$ has at most one positive difference. Thus some unused generator has unusually small overlap with $B$. Choosing the next generator adaptively combines this averaged overlap estimate with the universal chain expansion.

\bigskip
\noindent
Set
\begin{equation}\label{eq:dk-lambda}
d_k=\frac{k-1}{k-2},\qquad \lambda_k=\log_2d_k.
\end{equation}

\begin{theorem}\label{thm:main-lower-general}
For every fixed $k\ge4$,
\begin{equation}\label{eq:main-general-lower}
g_k(n)\gg_k d_k^n n^{-\lambda_k}.
\end{equation}
In particular,
\begin{equation}\label{eq:liminf-general-lower}
\liminf_{n\to\infty}g_k(n)^{1/n}\ge\frac{k-1}{k-2}.
\end{equation}
\end{theorem}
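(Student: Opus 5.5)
The plan is to build $A$ up one generator at a time, in an adaptively chosen order, and track the size of the partial subset-sum set. Fix a $k$-AP-free $H(A)$ with $A\subseteq[N]$ and $|A|=n$. Since $H(A)\subseteq[0,nN]$, we have $|H(A)|\le nN+1$. It therefore suffices to show $|H(A)|\gg_k n\,d_k^n n^{-\lambda_k}$.

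Write $B_0=\{0\}$ and $B_{t+1}=B_t\cup(B_t+h_{t+1})$, where $h_{t+1}$ is chosen from the set $U_t$ of unused generators. Every $B_t$ is a subset of $H(A)$, so each $B_{t+1}$ is $k$-AP-free. We have $|B_{t+1}|=2|B_t|-|B_t\cap(B_t+h)|$, and two lower bounds on this quantity are available.

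The first is the universal chain bound. Split $B_t$ into maximal $h$-chains. None has length $\ge k-1$, since otherwise it together with its top shifted by $h$ would give $k$ terms in $B_{t+1}$. Each chain gains a new point, so $|B_{t+1}|\ge d_k|B_t|$ for every choice of $h$.

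The second is the averaged overlap bound. Each unordered pair in $B_t$ contributes to $|B_t\cap(B_t+h)|$ for at most one $h$, and the elements of $U_t$ are distinct positive integers. Hence
\[
\sum_{h\in U_t}|B_t\cap(B_t+h)|\le\binom{|B_t|}{2},
\]
so some $h\in U_t$ gives $|B_{t+1}|\ge |B_t|\bigl(2-|B_t|/(2(n-t))\bigr)$. At each step I will choose this $h$; the universal bound holds regardless.

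The argument then runs in two phases. In the \emph{doubling phase}, while $t\le n/2$ and $|B_t|\le\varepsilon n$ (with $\varepsilon$ small, say $\varepsilon=1$), the growth factor is at least $2(1-|B_t|/(2n))$. Because $k\ge4$ gives $d_k\le 3/2$, the sizes grow at least geometrically by $3/2$ per step. Consequently $\sum_t|B_t|/n$ over this phase is bounded by a geometric series dominated by its last term, which is $O(\varepsilon)$. This yields
\[
|B_t|\ge c\,2^t\quad\text{for all } t\text{ in the phase},
\]
with an absolute $c>0$. Thus after $t_0\le\log_2(\varepsilon n/c)+1$ steps we reach $|B_{t_0}|\ge\varepsilon n/3$, or some comparable size $\gg n$. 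The phase ends well before $t=n/2$ for large $n$.

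In the \emph{chain phase} ($t\ge t_0$) I will use only $|B_{t+1}|\ge d_k|B_t|$. This gives
\[
|H(A)|\ge|B_n|\ge \tfrac{\varepsilon n}{3}\,d_k^{\,n-t_0}\ge c'\,n\,d_k^n\,(\varepsilon n/c)^{-\lambda_k}\gg_k n^{1-\lambda_k}d_k^n .
\]
Combining this with $|H(A)|\le nN+1$ gives $N\gg_k d_k^n n^{-\lambda_k}$. The liminf statement then follows, since $n^{-\lambda_k/n}\to1$.

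The main obstacle I expect is the bookkeeping in the doubling phase. The product $\prod\bigl(1-|B_t|/(4(n-t))\bigr)$ must be bounded below by a constant independent of $n$, while the phase still lasts $\log_2 n-O(1)$ steps. Any loss there that is polynomial in $n$ would change the exponent $\lambda_k$. To handle this I will take $\max\{d_k,\,2-|B_t|/(2(n-t))\}$ as the per-step factor and stop the phase exactly when $|B_t|$ first exceeds $\varepsilon n$. Then the terms $|B_t|/n$ form a sequence that decays geometrically backwards from the stopping time. This makes the product a constant and gives the stopping time $t_0=\log_2 n+O(1)$, exactly as required.
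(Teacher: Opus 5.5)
Your proposal is correct and follows essentially the paper's proof. You choose generators adaptively via the averaged-overlap bound $\sum_{h\in U}|B\cap(B+h)|\le\binom{|B|}{2}$ for about the first $\log_2 n$ steps, until $|B_t|\asymp n$. You then use only the chain expansion factor $d_k$ and convert via $|H(A)|\le nN+1$. The paper packages the first phase as the logistic iteration $y\mapsto1-(1-y)^2$ over exactly $\lfloor\log_2 n\rfloor$ steps, with $M=n-\lfloor\log_2 n\rfloor+1$, instead of your stopping-time product estimate.

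One small slip: $d_k\le 3/2$ does not give growth by $3/2$ per step. The backward geometric decay you need comes from the per-step factor being at least $\max\{d_k,\,2-\varepsilon\}>1$. Take $\varepsilon\le1/2$ for an absolute constant, or accept a $k$-dependent one.
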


\smallskip
\noindent
The exponential base $d_k$ comes from the one-shift chain argument. The distinctness and positivity of the generators enter only in the averaging step; they improve the polynomial factor in the final lower bound. By contrast, the earlier cube-growth lemma of Dietmann and Elsholtz, which also allows repeated directions, gives the weaker exponential base $k/(k-1)$ \cite{DietmannElsholtz2015}. A precise integer-valued version of the adaptive argument is given in Theorem~\ref{thm:adaptive-recurrence} and Corollary~\ref{cor:exact-recurrence-bound}.

\bigskip
\noindent
Our upper bound is a distinct-generator version of a restricted-digit construction. If $p$ is prime and the allowed base-$p$ digits are $0,1,\ldots,\min\{p,k\}-2$, then the resulting digit language is $k$-term-progression-free. Repeated copies of powers of $p$ would realize this language directly, but repetitions are forbidden because $A$ must be a set. We replace repeated generators by two-coordinate generators indexed by the edges of a nearly regular graph.

\bigskip
\noindent
For a prime $p\ge3$, put
\begin{equation}\label{eq:q-rho-definition}
q_{p,k}=\min\{p,k\}-1,
\qquad
\rho_{p,k}(n)=\max\left\{q_{p,k}-1,\ceil{\frac{2n}{q_{p,k}}}\right\}.
\end{equation}

\begin{theorem}\label{thm:main-upper-general}
Let $k\ge3$ and let $p\ge3$ be prime. Then, for every $n\ge1$,
\begin{equation}\label{eq:finite-upper-general}
g_k(n)<2p^{\rho_{p,k}(n)-1}.
\end{equation}
Consequently,
\begin{equation}\label{eq:limsup-upper-general}
\limsup_{n\to\infty}g_k(n)^{1/n}
 \le U_k:=\min_{\substack{p\ge3\\p\ \mathrm{prime}}}p^{2/(\min\{p,k\}-1)}.
\end{equation}
\end{theorem}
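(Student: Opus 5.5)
The plan is to realize a restricted-digit language in base $p$ with distinct generators, and to read the bound off the largest generator. Write $q=q_{p,k}$ and $r=\rho_{p,k}(n)$.

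\emph{Step 1: the digit language is progression-free.} Let $L$ be the set of nonnegative integers whose base-$p$ digits all lie in $\{0,1,\ldots,q-1\}$. I claim $L$ contains no nonconstant $(q+1)$-term progression, and hence no $k$-term one, since $q+1=\min\{p,k\}\le k$. Suppose $x,x+d,\ldots,x+qd\in L$ with $d>0$, and let $p^v\,\|\,d$. Adding $jd$ does not change the digits of $x$ below position $v$. So the digit of $x+jd$ at position $v$ is $x_v+jd_v \bmod p$, where $d_v\not\equiv 0 \pmod p$. Because $p$ is prime and $q+1\le p$, the $q+1$ values $j=0,\ldots,q$ give $q+1$ distinct residues. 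Only $q$ digits are allowed, which is a contradiction.

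\emph{Step 2: distinct generators with no carries.} Use digit positions $0,\ldots,r-1$. Take the $\min\{n,r\}$ singleton generators $p^i$. If $n>r$, also take $m=n-r$ edge generators $p^i+p^j$ ($i\ne j$), indexed by the edges of a simple graph $G$ on $\{0,\ldots,r-1\}$ with maximum degree at most $q-2$. All these generators are distinct positive integers, since their base-$p$ digit vectors differ. In any subset sum, the digit at position $i$ before carrying is at most $1+\deg_G(i)\le q-1<p$. Therefore no carries occur, every subset sum lies in $L$, and by Step 1 the set $H(A)$ is $k$-term-progression-free.

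\emph{Step 3: existence of $G$.} This is the only step with real content. We need a graph on $r$ vertices with exactly $m=n-r$ edges and maximum degree at most $q-2$.
\begin{itemize}
\item Since $r\ge \lceil 2n/q\rceil$, we have $rq\ge 2n$, so $m\le r(q-2)/2$, and hence $m\le\lfloor r(q-2)/2\rfloor$.
\item Since $r\ge q-1$, there is a graph on $r$ vertices with all degrees $q-2$, or all but one equal to $q-2$ and one equal to $q-3$ when $r(q-2)$ is odd. A circulant graph, possibly minus a near-perfect matching, serves. It has $\lfloor r(q-2)/2\rfloor$ edges.
\item Delete edges from it until exactly $m$ remain.
\end{itemize}
The degenerate case $q=2$ needs no graph: then $r\ge n$ and singletons suffice.

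\emph{Step 4: the bounds.} The largest generator is at most $p^{r-1}+p^{r-2}<2p^{r-1}$, so $A\subseteq[2p^{r-1}-1]$. This gives $g_k(n)<2p^{\rho_{p,k}(n)-1}$, which is \eqref{eq:finite-upper-general}. For large $n$ we have $\rho_{p,k}(n)=2n/q+O(1)$, so $\limsup_{n\to\infty} g_k(n)^{1/n}\le p^{2/q}$. Minimizing over primes $p\ge3$ yields \eqref{eq:limsup-upper-general}.
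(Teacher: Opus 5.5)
Your proof is correct and is essentially the paper's argument. It uses the same carry-free restricted-digit language, the same generators $p^i$ together with $p^i+p^j$ over the edges of a graph of maximum degree $q-2$ on $r=\rho_{p,k}(n)$ vertices, and the same height bound $p^{r-1}+p^{r-2}<2p^{r-1}$. The differences are cosmetic: you build exactly $n$ generators instead of $\lfloor qr/2\rfloor$ followed by discarding, and you merge the two cases of the digit lemma via $q+1=\min\{p,k\}\le k$.

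There is one slip in Step 3, in the case where $r$ and $q-2$ are both odd. There no $(q-2)$-regular circulant exists, so the circulant you start from must be the $(q-1)$-regular one. Removing a near-perfect matching from it leaves $(r(q-2)+1)/2$ edges and leaves the unmatched vertex at degree $q-1$. That is not the degree sequence you claim.

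Either of two easy repairs works:
\begin{itemize}
\item Delete one further edge at the unmatched vertex. Your later deletion step has room for this, since $m\le (r(q-2)-1)/2$.
\item As in Lemma~\ref{lem:nearly-regular-graph}, start from the $(q-3)$-regular circulant with steps $1,\ldots,(q-3)/2$. Then add a maximum matching of the odd cycle formed by the step-$(r-1)/2$ edges.
\end{itemize}
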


\smallskip
\noindent
Choosing a prime $p=(1+o(1))k$ gives the following for large $k$:

\begin{corollary}\label{cor:large-k-rates}
As $k\to\infty$,
\begin{equation}\label{eq:large-k-rates}
\frac{1+o(1)}{k}
 \le \log\!\left(\liminf_{n\to\infty}g_k(n)^{1/n}\right)
 \le \log\!\left(\limsup_{n\to\infty}g_k(n)^{1/n}\right)
 \le (2+o(1))\cdot\frac{\log k}{k}.
\end{equation}
\end{corollary}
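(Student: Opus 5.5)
The lower bound follows from \eqref{eq:liminf-general-lower} by taking logarithms. The upper bound follows from \eqref{eq:limsup-upper-general} with one specific prime near $k$. The whole argument is an asymptotic computation on these two earlier bounds.

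\emph{Lower bound.} By Theorem~\ref{thm:main-lower-general}, $\liminf_{n\to\infty}g_k(n)^{1/n}\ge (k-1)/(k-2)$ for every $k\ge4$. Hence
\[
\log\!\left(\liminf_{n\to\infty}g_k(n)^{1/n}\right)\ge \log\!\left(1+\frac{1}{k-2}\right).
\]
Since $\log(1+x)=x-O(x^2)$, the right-hand side equals $\frac{1}{k-2}-O(k^{-2})=\frac{1+o(1)}{k}$. The middle inequality in \eqref{eq:large-k-rates}, $\liminf\le\limsup$, is trivial.

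\emph{Upper bound.} By \eqref{eq:limsup-upper-general}, for any prime $p\ge3$ we have $\log\limsup_n g_k(n)^{1/n}\le \frac{2\log p}{\min\{p,k\}-1}$. I would choose $p$ to be a prime with $p\le k$ and $p=(1+o(1))k$. Then $\min\{p,k\}=p$ and the bound becomes $\frac{2\log p}{p-1}$.

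Such primes exist by the prime number theorem in the form $p_{m+1}/p_m\to1$, or by Bertrand-type results for short intervals such as Baker--Harman--Pintz, which give a prime in $[k-k^{0.525},k]$. Concretely, I take the largest prime $p\le k$, so that $p/k\to1$. Then $\log p=\log k+o(1)$ and $p-1=(1+o(1))k$. This gives $\frac{2\log p}{p-1}=(2+o(1))\frac{\log k}{k}$.

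One could instead take the least prime $p\ge k$, in which case the minimum equals $k$. The exponent becomes $2\log p/(k-1)$, with $\log p=\log k+o(1)$, and the same asymptotic follows.

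The only nontrivial input is the existence of a prime $p\sim k$, which is a standard fact about prime gaps, for example $p_{m+1}-p_m=o(p_m)$. Everything else is elementary manipulation of the two earlier theorems, so I do not expect any real obstacle.
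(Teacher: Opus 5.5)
Your proposal is correct and follows essentially the same route as the paper. You get the lower bound from \eqref{eq:liminf-general-lower} via $\log\bigl(1+\frac{1}{k-2}\bigr)=\frac1k+O(k^{-2})$, and the upper bound by inserting a prime $p=(1+o(1))k$ into \eqref{eq:limsup-upper-general}; taking the largest prime $p\le k$ (so that $\min\{p,k\}=p$) only makes explicit a detail the paper leaves implicit.
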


\smallskip
\noindent
The missing factor of $\log k$ in the lower bound is the principal gap for large $k$. 

\bigskip
\noindent
We are not aware of previous statements of the finite bound in Theorem~\ref{thm:main-k3}, the adaptive recurrence in Theorem~\ref{thm:adaptive-recurrence}, the universal chain recurrence in Corollary~\ref{cor:chain-only-recurrence}, or the distinct-generator graph construction in Theorem~\ref{thm:graph-construction}. The one-shift ingredients are elementary and the surrounding literature includes work under the terminology of Hilbert cubes, bounded-coefficient dissociated sets, detecting sets, and detecting matrices.

\section{Related Work}\label{sec:related}

\subsection{Arithmetic Progressions in Subset Sums}

The systematic study of arithmetic progressions in subset-sum sets goes back at least to Erd\H{o}s and S\'ark\H{o}zy \cite{ErdosSarkozy1992}. Their work contains the original form of the present problem and proves a lower bound of the shape $3^n/n^{O(1)}$ for $g_3(n)$. The current status and Erd\H{o}s's question $g_3(n)\gg3^n$ are also recorded as Erd\H{o}s Problem 817 \cite{Bloom817}.

\bigskip
\noindent
A complementary line of work asks for long progressions under density or sparsity hypotheses on the generators. Schoen proved that if $A\subseteq[N]$, then the longest arithmetic progression in $H(A)$ has length at least a constant multiple of $|A|/\log N$ \cite{Schoen2011}. This already implies an exponential lower bound of the form $\log g_k(n)\gg n/k$, but it does not produce the factor $\log k$ suggested by restricted-digit examples. Szemer\'edi and Vu established much stronger structural results when the generator set is sufficiently dense in its ambient interval, including arithmetic progressions and proper generalized arithmetic progressions in the subset-sum set \cite{SzemerediVu2006}. Conlon, Fox, and Pham subsequently obtained homogeneous versions of the generalized-arithmetic-progression conclusions \cite{ConlonFoxPham2023}. These density and structure theorems are powerful in polynomial regimes, but their fixed-parameter hypotheses and constants do not presently resolve the exponentially sparse regime relevant to $g_k(n)$.

\bigskip
\noindent
Quantitative forms of Szemer\'edi's theorem can be applied after one has a lower bound for $|H(A)|$. If $r_k(X)$ denotes the largest size of a $k$-term-progression-free subset of an interval of length $X$, then any cube-size lower bound can be inverted through $r_k$. The strongest general estimates currently available include the polylogarithmic bound for $r_4$ of Green and Tao \cite{GreenTao2017} and the subexponential density saving for fixed $k\ge5$ of Leng, Sah, and Sawhney \cite{LengSahSawhney2024}. In our setting these estimates improve lower-order factors but do not alter the exponential base obtained from the adaptive recurrence.

\subsection{Hilbert Cubes and Sumset Growth}

A set of the form $a_0+\{0,a_1\}+\cdots+\{0,a_d\}$ is usually called a Hilbert cube. Dietmann and Elsholtz developed sumset-growth methods for Hilbert cubes contained in progression-free and other arithmetic sets \cite{DietmannElsholtz2012,DietmannElsholtz2015}. Their general cube lemma states that a $d$-dimensional Hilbert cube contained in a set with no $k$-term arithmetic progression has at least $2(k/(k-1))^{d-1}-1$ elements. Their definition allows repeated directions.

\bigskip
\noindent
Lemma~\ref{lem:chain-overlap} below gives the stronger one-step factor $(k-1)/(k-2)$ without requiring directions to be positive or distinct; only nonzero directions are needed. The pairwise distinct positive hypothesis enters later, in Lemma~\ref{lem:average-overlap}, where all remaining direction differences are averaged simultaneously. Related recent work on growth of progression-free sumsets includes Elsholtz, Ruzsa, and Wurzinger \cite{ElsholtzRuzsaWurzinger2025}. Recent general frameworks for Hilbert cubes in arithmetic sets have also been developed by Croot, Mao, and Yip \cite{CrootMaoYip2026}, although their principal applications concern multiplicatively defined ambient sets rather than the extremal function studied here.

\subsection{Distinct Subset Sums and Grid Bandwidth}

A set is subset-sum-distinct, or dissociated, if all of its ordinary subset sums are distinct. Bae introduced $q$-fold subset-sum-distinct sets, for which all coefficient sums with coefficients in $\{0,1,\ldots,q\}$ are distinct \cite{Bae2002}. In the three-term case of the present problem, the relevant notion is exactly two-fold subset-sum-distinctness. Closely related objects also occur under the names bounded-coefficient dissociated sets, detecting sets, and detecting sequences or matrices. Recent work of Dutta studies greedy algorithms and quantitative bounds for dissociated sets and their generalizations \cite{Dutta2026}. Specializing Dutta's general $D_q$-set estimate to $q=2$ gives a lower bound with leading term $\frac{\sqrt3}{4\sqrt\pi}\cdot3^n/\sqrt n$ for the largest element of a two-fold subset-sum-distinct $n$-set. The bandwidth computation in Theorem~\ref{thm:main-k3} doubles this leading constant and supplies the finite expression in \eqref{eq:main-k3-finite}.

\bigskip
\noindent
The graph-theoretic ingredient in Theorem~\ref{thm:main-k3} belongs to the classical bandwidth problem. Harper proved optimality of the Hales order for hypercubes \cite{Harper1966}, and Moghadam extended the optimality statement to Cartesian products of paths \cite{Moghadam2005}. Billera and Blanco obtained a numerical formula for the bandwidth of equal path products and, in particular, an exact formula for the ternary grid used below \cite{BilleraBlanco2013}. The connection between this bandwidth and three-term-progression-free subset sums is especially rigid: it gives the lower bound in Theorem~\ref{thm:main-k3} and precisely identifies the information discarded by the graph-layout relaxation.

\subsection{Restricted Digits}

Restricted-digit sets are a standard source of progression-free examples. For prime base $p$, omitting at least one residue from the allowed digits prevents a progression of length $p$, and smaller digit alphabets prevent shorter progressions as well. When the progression length $k$ is prime, Dietmann and Elsholtz use base $k$, digits $0,1,\ldots,k-2$, and repeated powers of $k$ to demonstrate near-sharpness for general Hilbert cubes \cite{DietmannElsholtz2015}. The primality condition is essential for that formulation: in base $4$, the numbers $0,2,4,6$ use only the digits $0,1,2$ but form a four-term arithmetic progression. Repeated powers cannot be used directly in the definition of $g_k(n)$. The construction in Section~\ref{sec:upper} instead uses a prime base and resolves the distinctness obstruction by replacing repetitions with graph edges while retaining a carry-free digit description.

\section{Preliminaries}\label{sec:preliminaries}

Throughout the paper, $A$ is a finite set of distinct positive integers. If $A\subseteq[N]$ and $|A|=n$, then
\begin{equation}\label{eq:max-sum}
\sum_{a\in A}a\le N+(N-1)+\cdots+(N-n+1)=nN-\binom{n}{2}.
\end{equation}
Consequently,
\begin{equation}\label{eq:interval-upper-H}
|H(A)|\le nN-\binom{n}{2}+1.
\end{equation}
We will repeatedly convert lower bounds for $|H(A)|$ into lower bounds for $N$ through \eqref{eq:interval-upper-H}.

\bigskip
\noindent
For a finite graph $G=(V,E)$, a labeling is a bijection $f:V\to\{1,\ldots,|V|\}$. Its bandwidth is
\begin{equation}\label{eq:bandwidth-definition}
\bw(G)=\min_f\max_{uv\in E}|f(u)-f(v)|.
\end{equation}
Let $Q_n^{(3)}$ denote the Cartesian product of $n$ copies of the three-vertex path; equivalently, its vertex set is $\{0,1,2\}^n$, and two vertices are adjacent when they differ by $1$ in exactly one coordinate.

\bigskip
\noindent
We also record a density-transfer formulation. Let $r_k(X)$ be the maximum size of a $k$-term-progression-free subset of an interval of $X$ consecutive integers, and define
\begin{equation}\label{eq:Rk-definition}
R_k(m)=\min\{X:r_k(X)\ge m\}.
\end{equation}
If $H(A)$ is $k$-term-progression-free and $|H(A)|\ge L$, then \eqref{eq:max-sum} gives
\begin{equation}\label{eq:density-transfer-basic}
R_k(L)\le nN-\binom{n}{2}+1.
\end{equation}
Thus every cube-size lower bound $L$ yields
\begin{equation}\label{eq:density-transfer-g}
g_k(n)\ge \left\lceil\frac{R_k(L)+\binom{n}{2}-1}{n}\right\rceil.
\end{equation}
We will use the elementary choice $R_k(L)\ge L$ for the main theorems, while \eqref{eq:density-transfer-g} allows any quantitative form of Szemer\'edi's theorem to be inserted afterward.

\section{The Three-Term Case}\label{sec:k3}

The key algebraic reduction is exact.

\begin{proposition}\label{prop:ternary-equivalence}
Let $A=\{a_1,\ldots,a_n\}$ be a set of positive integers, and define
\begin{equation}\label{eq:ternary-map}
\Phi_A:\{0,1,2\}^n\longrightarrow\Z,
\qquad
\Phi_A(\eps)=\sum_{i=1}^n\eps_i a_i.
\end{equation}
Then $H(A)$ contains no nonconstant three-term arithmetic progression if and only if $\Phi_A$ is injective.
\end{proposition}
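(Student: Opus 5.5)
Both directions rest on one identity: a nonconstant three-term progression $x<y<z$ in $H(A)$ is the same as a relation $x+z=2y$ with $x\neq z$, and $x+z$ and $2y$ are both values of $\Phi_A$. I will write subset sums as $\sigma(S)=\sum_{i\in S}a_i$ for $S\subseteq[n]$; since the $a_i$ are distinct, $A$ has exactly $n$ elements indexed by $[n]$. Throughout, $\mathbf 1_S\in\{0,1\}^n$ denotes the indicator vector of $S$.

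\emph{Injectivity implies no progressions.} Suppose $x,y,z\in H(A)$ with $x+z=2y$ and $x\neq z$. Choose $S,T,U$ with $x=\sigma(S)$, $z=\sigma(T)$, $y=\sigma(U)$. Then
\[
\Phi_A(\mathbf 1_S+\mathbf 1_T)=x+z=2y=\Phi_A(2\cdot\mathbf 1_U),
\]
and both arguments lie in $\{0,1,2\}^n$. If $\Phi_A$ is injective, then $\mathbf 1_S+\mathbf 1_T=2\cdot\mathbf 1_U$. The left side has a coordinate equal to $1$ exactly on $S\triangle T$, while the right side has no coordinate equal to $1$. Hence $S=T$, so $x=z$, which is a contradiction.

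\emph{No progressions implies injectivity.} Suppose $\Phi_A(\eps)=\Phi_A(\delta)$ with $\eps\neq\delta$. First I cancel the common part. Where $\eps_i,\delta_i$ are both positive, subtract $\min(\eps_i,\delta_i)$ from both coordinates; this preserves the equality. Afterwards, in every coordinate at least one of $\eps_i,\delta_i$ is $0$, and $\eps\ne\delta$ still holds. So $\eps$ and $\delta$ have disjoint supports and
\[
\sum_i \eps_i a_i=\sum_i\delta_i a_i,
\]
with all entries in $\{0,1,2\}$. Next I decompose each vector into two $0/1$ vectors: $\eps=\mathbf 1_{S_1}+\mathbf 1_{S_2}$ with $S_2\subseteq S_1$, where $S_1=\operatorname{supp}\eps$ and $S_2=\{\eps_i=2\}$, and similarly $\delta=\mathbf 1_{T_1}+\mathbf 1_{T_2}$. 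This gives
\[
\sigma(S_1)+\sigma(S_2)=\sigma(T_1)+\sigma(T_2),
\]
which is an additive quadruple in $H(A)$. It is not yet a three-term progression, and producing one is the crux.

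The obvious fix is the symmetric choice $P=S_2\cup T_1$, $R=S_1\cup T_2$, and $Q$ defined through $2\sigma(Q)$. That fails because a general additive quadruple in $H(A)$ need not be balanced around a single midpoint. A cleaner route is to build the progression directly from the coefficients. Using disjoint supports, define
\[
x=\sigma(\{\eps_i\ge1\})+\sigma(\{\delta_i=2\}),\qquad z=\sigma(\{\eps_i=2\})+\sigma(\{\delta_i\ge1\}).
\]
Then
\[
x+z=\Phi_A(\eps)+\Phi_A(\delta)=2\Phi_A(\eps).
\]
I then need $\Phi_A(\eps)$ to be the middle term, which requires $\Phi_A(\eps)\in H(A)$. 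That is false in general. So I instead set $y=\sigma(\operatorname{supp}\eps\cup\operatorname{supp}\delta)$. Then
\[
2y=\sum_i(\text{indicator of }\eps_i\ge1)\,2a_i+\sum_i(\text{indicator of }\delta_i\ge1)\,2a_i,
\]
while
\[
x+z=\sum_{\eps_i=1}a_i+\sum_{\eps_i=2}3a_i+\sum_{\delta_i=1}a_i+\sum_{\delta_i=2}3a_i.
\]
These are not equal. I would correct for the gap by using the relation $\Phi_A(\eps)=\Phi_A(\delta)$ to trade terms between the two sides.

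I expect the main obstacle to be finding the right three subsets. If a direct construction resists, I would try a minimal counterexample. Choose the colliding pair $\eps\neq\delta$ to minimise $\sum_i(\eps_i+\delta_i)$. Then perform case analysis on how many coordinates equal $2$, and reduce each case either to a genuine three-term progression or to a smaller collision.

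The three sets must satisfy two conditions: they must lie in $H(A)$, and the outer two must differ, which should follow from $\eps\neq\delta$ via the disjoint supports.
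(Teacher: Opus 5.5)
Your forward direction (injectivity implies no nonconstant progression) is correct and is the paper's argument. The converse is not proved. You correctly identify the crux, which is producing three subset sums in progression from a collision $\Phi_A(\eps)=\Phi_A(\delta)$. But each construction you write down fails, as you note yourself, and the fallback of a minimal counterexample with case analysis is only a plan.

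The missing idea is to route the negative entries of the difference vector through the middle term. Every $t\in\{-2,\dots,2\}$ can be written as $t=u+w-2v$ with $u,v,w\in\{0,1\}$; in particular $-1=1-2$. So after your cancellation, an index with $\delta_i=1$ should go into an outer set \emph{and} into the middle set. Concretely, with your disjoint supports, put
\[
x=\sigma(\{\eps_i\ge1\}\cup\{\delta_i=1\}),\qquad z=\sigma(\{\eps_i=2\}),\qquad y=\sigma(\{\delta_i\ge1\}).
\]
These are genuine subset sums, since the supports are disjoint. Then
\[
x+z=\Phi_A(\eps)+\sigma(\{\delta_i=1\})=\Phi_A(\delta)+\sigma(\{\delta_i=1\})=2y.
\]
Your attempts kept all of $\delta$ on the outer side, which forced the middle term to be $\Phi_A(\eps)$, and that is not in general a subset sum.

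Your closing claim that the outer terms differ ``via the disjoint supports'' is false, and this degenerate case needs its own argument. In the construction above, $x-z=\sigma(\{\eps_i=1\}\cup\{\delta_i=1\})$. So $x=z$ exactly when $\eps=2\mathbf 1_P$ and $\delta=2\mathbf 1_Q$, that is, when the collision is just a doubled subset-sum collision $\sigma(P)=\sigma(Q)$. Your own choice of $x,z$ also collapses in this case. No progression can then be read off from the collision coefficients directly.

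The paper handles this case separately. Positivity and $\eps\neq\delta$ force $P,Q$ to be disjoint and both nonempty. So $s=\sigma(P)>0$, and $0,\ s,\ 2s=\sigma(P\cup Q)$ is a nonconstant progression in $H(A)$. The paper's proof is organized exactly this way:
\begin{itemize}
\item decompose $c-d$ coordinatewise as $u+w-2v$;
\item take the resulting triple if its terms are not all equal;
\item otherwise extract a subset-sum collision between two distinct binary vectors and use $0,s,2s$.
\end{itemize}
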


\begin{proof}
Assume first that $\Phi_A$ is injective, and suppose $x,z,y\in H(A)$ satisfy $x+z=2y$. Choose $u,w,v\in\{0,1\}^n$ with $x=\sum_i u_i a_i$, $z=\sum_i w_i a_i$, and $y=\sum_i v_i a_i$. Then
\[
\Phi_A(u+w)=\Phi_A(2v).
\]
Both coefficient vectors belong to $\{0,1,2\}^n$, so injectivity gives $u+w=2v$. Coordinatewise, this forces $u_i=v_i=w_i$ for every $i$, and hence $x=y=z$.

\bigskip
\noindent
Conversely, suppose that $\Phi_A$ is not injective. Choose distinct $c,d\in\{0,1,2\}^n$ with $\Phi_A(c)=\Phi_A(d)$ and set $\delta=c-d$. For each $\delta_i\in\{-2,-1,0,1,2\}$, choose $u_i,v_i,w_i\in\{0,1\}$ so that
\begin{equation}\label{eq:delta-decomposition}
\delta_i=u_i+w_i-2v_i.
\end{equation}
For example, one may use the triples $(u_i,w_i,v_i)=(0,0,1),(1,0,1),(0,0,0),(1,0,0),(1,1,0)$ for $\delta_i=-2,-1,0,1,2$, respectively. Equation \eqref{eq:delta-decomposition} gives subset sums $x=\sum_i u_i a_i$, $z=\sum_i w_i a_i$, and $y=\sum_i v_i a_i$ satisfying $x+z=2y$. If these three numerical sums are not all equal, the equation forces them to be three distinct values in arithmetic progression.

\bigskip
\noindent
It remains to consider the case $x=y=z$. At least two of the binary vectors $u,v,w$ are distinct; call two such vectors $\alpha$ and $\beta$. Put
\[
P=\operatorname{supp}(\alpha)\setminus\operatorname{supp}(\beta),
\qquad
Q=\operatorname{supp}(\beta)\setminus\operatorname{supp}(\alpha).
\]
After cancelling the common support, the equality of the corresponding subset sums gives
\[
\sum_{i\in P}a_i=\sum_{i\in Q}a_i=:s.
\]
Positivity implies that both $P$ and $Q$ are nonempty, so $s>0$. Since $P$ and $Q$ are disjoint, the sums $0,s,2s$ all belong to $H(A)$ and form a nonconstant three-term progression. This proves the converse.
\end{proof}

\begin{corollary}[Integer-linear formulation]\label{cor:integer-linear-formulation}
For every $n\ge1$,
\begin{equation}\label{eq:integer-linear-formulation}
g_3(n)=\min\left\{
\max_{1\le i\le n}a_i:
(a_1,\ldots,a_n)\in\N^n,
\ \eps\mapsto\sum_{i=1}^n\eps_i a_i
\text{ is injective on }\{0,1,2\}^n
\right\}.
\end{equation}
Injectivity in \eqref{eq:integer-linear-formulation} automatically forces the $a_i$ to be pairwise distinct.
\end{corollary}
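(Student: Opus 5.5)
The plan is to derive the corollary directly from Proposition~\ref{prop:ternary-equivalence}, showing that the feasible sets on the two sides of \eqref{eq:integer-linear-formulation} coincide once we identify an $n$-element set $A=\{a_1,\ldots,a_n\}$ with a tuple listing its elements. The only point needing care is that the right-hand side ranges over tuples, which a priori might have repeated entries, while $g_3(n)$ ranges over genuine $n$-element sets.

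\textbf{Distinctness.} First I would prove the last sentence. Suppose injectivity holds and $a_i=a_j$ for some $i\ne j$. Let $e_i,e_j\in\{0,1\}^n\subseteq\{0,1,2\}^n$ be the corresponding standard basis vectors. Then $\Phi(e_i)=a_i=a_j=\Phi(e_j)$ with $e_i\ne e_j$, which contradicts injectivity. So every feasible tuple has pairwise distinct entries, and its entries form an $n$-element set $A\subseteq\N$. I take $\N$ to mean the positive integers, matching the paper's standing convention that generators are positive; this is needed so that $A\subseteq[N]$ makes sense.

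\textbf{The two inequalities.} Suppose $A\subseteq[N]$ has $n$ elements and $H(A)$ is free of nonconstant three-term progressions. Listing its elements as $(a_1,\ldots,a_n)$, the proposition shows $\Phi_A$ is injective, so the tuple is feasible with $\max a_i\le N$. Hence the right-hand side is at most $g_3(n)$. Conversely, a feasible tuple attaining the minimum $M$ has distinct positive entries by the first step. The proposition then shows $H(A)$ is three-term-progression-free, and $A\subseteq[M]$, so $g_3(n)\le M$.

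\textbf{Well-definedness.} Both minima exist. The feasible set is nonempty, since $a_i=3^{i-1}$ gives injectivity by uniqueness of base-$3$ expansions. The quantity being minimized is a positive integer, so the minimum is attained. I expect no real obstacle here: the corollary is a direct translation of the proposition, and the only subtle point is the distinctness check needed to pass from tuples to sets.
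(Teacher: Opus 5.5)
Your proposal is correct and matches the paper's proof. The paper likewise cites Proposition~\ref{prop:ternary-equivalence} for the equivalence and compares the unit vectors $e_i,e_j$ to get distinctness; your two-inequality check and the attainment of the minimum spell out what the paper leaves implicit. (The positivity convention on $\N$ is harmless either way, since injectivity also rules out a zero entry: $\Phi(0)=\Phi(e_i)$ whenever $a_i=0$.)
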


\begin{proof}
Proposition~\ref{prop:ternary-equivalence} gives the equivalence. If $a_i=a_j$ for $i\ne j$, then the two ternary vectors having a single $1$ in coordinates $i$ and $j$, respectively, have the same image, so injectivity fails.
\end{proof}

\smallskip
\noindent
The next lemma relaxes the integer-linear problem to ordinary graph bandwidth.

\begin{lemma}\label{lem:bandwidth-reduction}
If $A\subseteq[N]$ has $|A|=n$ and $H(A)$ is three-term-progression-free, then
\begin{equation}\label{eq:bandwidth-reduction}
N\ge\bw\!\left(Q_n^{(3)}\right).
\end{equation}
\end{lemma}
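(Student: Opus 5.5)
We plan to deduce the lemma from Proposition~\ref{prop:ternary-equivalence} by using the linear form $\Phi_A$ itself to produce a labeling of $Q_n^{(3)}$. Write $A=\{a_1,\ldots,a_n\}$. Since $H(A)$ is three-term-progression-free, Proposition~\ref{prop:ternary-equivalence} says that $\Phi_A$ is injective on $\{0,1,2\}^n$. Its $3^n$ values are therefore distinct integers. List the vertices of $Q_n^{(3)}$ in increasing order of $\Phi_A$, and let $f(\eps)\in\{1,\ldots,3^n\}$ be the rank of $\Phi_A(\eps)$ in this list. Then $f$ is a bijection and hence a labeling.

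The key step is to compare rank differences with value differences. Take an edge $\eps\eps'$ of $Q_n^{(3)}$, so that $\eps'=\eps+e_i$ for some coordinate $i$. Then
\[
\Phi_A(\eps')-\Phi_A(\eps)=a_i\in[1,N].
\]
Suppose the vertices $\eps$ and $\eps'$ have ranks $r$ and $r+t$. The $t+1$ values with ranks $r,r+1,\ldots,r+t$ are distinct integers that all lie in the interval $[\Phi_A(\eps),\Phi_A(\eps')]$. This interval contains only $a_i+1$ integers, so $t\le a_i\le N$. Therefore $|f(\eps)-f(\eps')|\le N$ for every edge.

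Taking the maximum over edges gives a labeling whose bandwidth is at most $N$. Since $\bw$ is defined as a minimum over labelings, this yields $\bw(Q_n^{(3)})\le N$, which is \eqref{eq:bandwidth-reduction}.

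There is no real obstacle here. The one point that needs care is that injectivity is exactly what makes $f$ a bijection. Injectivity is also what allows the pigeonhole count of distinct integers in the interval, turning a numerical gap of at most $N$ into a rank gap of at most $N$. Both uses of injectivity come directly from Proposition~\ref{prop:ternary-equivalence}.
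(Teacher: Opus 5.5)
Your proposal is correct and follows essentially the same route as the paper: label the vertices of $Q_n^{(3)}$ by the rank of their $\Phi_A$-value, which is a bijection by Proposition~\ref{prop:ternary-equivalence}, then use the fact that an interval of integer length $a_i\le N$ contains at most $a_i+1$ integers to bound each edge's rank difference by $N$.
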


\begin{proof}
By Proposition~\ref{prop:ternary-equivalence}, the values of $\Phi_A$ are distinct integers. Label the vertices of $Q_n^{(3)}$ in increasing order of their $\Phi_A$-values. If two vertices are adjacent in coordinate $i$, their numerical values differ by $a_i\le N$. An interval of integral length $a_i$ contains at most $a_i+1$ integers, so the ranks of two distinct integer values at its endpoints differ by at most $a_i$. Every edge therefore has label difference at most $N$, which proves \eqref{eq:bandwidth-reduction}.
\end{proof}

\smallskip
\noindent
The exact bandwidth formula of Billera and Blanco for products of equal paths has the following specialization \cite[Theorem 2.3 and Lemma 3.1]{BilleraBlanco2013}.

\begin{proposition}\label{prop:ternary-grid-bandwidth}
For every $n\ge1$,
\begin{equation}\label{eq:ternary-grid-bandwidth}
\bw\!\left(Q_n^{(3)}\right)=\frac{T_n-1}{2}+\sum_{j=0}^{n-1}T_j.
\end{equation}
\end{proposition}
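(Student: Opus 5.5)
The proposition is the specialization of the Billera--Blanco formula to the ternary grid, so the plan is to invoke their general result and then evaluate it for $k=3$. Their Theorem~2.3 identifies the bandwidth of $P_k^n$ with a quantity coming from the Harper/Moghadam optimal numbering. By Moghadam's theorem \cite{Moghadam2005}, the simplicial order (by coordinate sum $|\eps|$, ties broken by a fixed lexicographic rule) is bandwidth-optimal for products of equal paths. Its bandwidth is the maximum, over all vertices $v$, of the number of vertices lying strictly between $v$ and its "upper neighbours" in that order, plus one.

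\textbf{Step 1: bound every edge.} Order $\{0,1,2\}^n$ by level $s=|\eps|$, with level sizes $T(n,s)=[x^s](1+x+x^2)^n$. Every edge joins level $s$ to level $s+1$. An edge starting at the position of $v$ in level $s$ therefore spans the vertices after $v$ in level $s$, plus the vertices before its neighbour in level $s+1$. Within a level, Billera--Blanco's Lemma~3.1 controls the rank offset. The extremal edge occurs across the middle level $s=n$, where the span is
\begin{equation*}
\frac{T_n-1}{2}+\sum_{j=0}^{n-1}T_j .
\end{equation*}
I would recover this by a recursion on the last coordinate. Vertices of level $n$ split according to $\eps_n\in\{0,1,2\}$ into copies of levels $n,n-1,n-2$ of the $(n-1)$-dimensional grid. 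Using $T(n-1,n-1)=T_{n-1}$, the central term decomposes so that one extra $T_j$ is added at each step of the recursion. The $(T_n-1)/2$ term comes from the symmetry $\eps\mapsto 2-\eps$, which reverses the central level.

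\textbf{Step 2: match the lower bound.} The general Billera--Blanco theorem gives the matching lower bound (a Harper-type boundary/isoperimetric argument), so we may simply cite it. Only the arithmetic identification of their expression with $b_n$ needs checking.

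\textbf{Main obstacle.} The translation of notation is where the work lies. Their formula is phrased through sums of restricted-composition counts, and these must be rewritten in terms of central trinomial coefficients. I would first verify the identity numerically for small $n$: for $n=1$, $(3-1)/2+1=2=\bw(P_3)$; for $n=2$, $(7-1)/2+1+1=5$, which is indeed the bandwidth of the $3\times3$ grid. I would then prove the general identity by induction on $n$ using the recursion
\begin{equation*}
T(n,s)=T(n-1,s)+T(n-1,s-1)+T(n-1,s-2).
\end{equation*}
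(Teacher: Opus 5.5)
Your outline (cite Billera--Blanco, with Moghadam's optimality of the simplicial order, then rewrite their number as $b_n$) matches the paper's. However, the step you call ``where the work lies'' is never carried out, and the evidence you offer for it is wrong.

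\textbf{The formula you must match is never stated.} For $P_3^n$ the Billera--Blanco quantity is $\bw(Q_n^{(3)})=\sum_{j=0}^{n-1}\mathcal R_j$. Here $\mathcal R_j$ is the sum of the two largest coefficients of $(1+x+x^2)^j$, with $\mathcal R_0=1$. This is a sum over lower dimensions $j<n$. Your Step~1 asserts that a single edge across the middle level of the $n$-dimensional order spans exactly $b_n$, with ``one extra $T_j$ per recursion step'' and $(T_n-1)/2$ coming from $\eps\mapsto2-\eps$. It supplies no mechanism producing that sum, so it is not an argument.

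\textbf{Your sanity checks misevaluate $T_n=[x^n](1+x+x^2)^n$.} In fact $T_0=T_1=1$, $T_2=3$, $T_3=7$.
\begin{itemize}
\item For $n=1$ you used $T_1=3$, getting $2$.
\item For $n=2$ you used $T_2=7$ while using $T_1=1$ inside the sum, getting $5$.
\end{itemize}
You then claimed $\bw(P_3)=2$ and that the $3\times3$ grid has bandwidth $5$. Both are false: $P_3$ is a path, so its bandwidth is $1$, and $\bw(P_3\square P_3)=3$. The correct values $b_1=0+1=1$ and $b_2=1+1+1=3$ do match. Your check only ``agreed'' because wrong right-hand sides were compared with equally wrong bandwidths, so it could not detect an incorrect identity.

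\textbf{The missing identification is short.} It needs no induction over the level structure. Write $C_j(r)=[x^r](1+x+x^2)^j$.
\begin{itemize}
\item Symmetry and unimodality of these coefficients give $\mathcal R_j=T_j+C_j(j-1)$ for $j\ge1$.
\item Your recursion at $s=j+1$, together with $C_j(j+1)=C_j(j-1)$, gives $T_{j+1}=T_j+2C_j(j-1)$.
\end{itemize}
Hence $\mathcal R_j=\tfrac12(T_j+T_{j+1})$ for $j\ge1$, and the sum telescopes:
\[
\sum_{j=0}^{n-1}\mathcal R_j
 =1+\sum_{j=1}^{n-1}T_j+\frac{T_n-T_1}{2}
 =\sum_{j=0}^{n-1}T_j+\frac{T_n-1}{2}.
\]
This two-line computation is the entire content of the paper's proof once the citation is in place. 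It is what your proposal needs in place of the Step~1 heuristic and the erroneous numerical checks.
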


\begin{proof}
For $j\ge0$, write $C_j(r)=[x^r](1+x+x^2)^j$, so that $T_j=C_j(j)$. Billera and Blanco prove that $\bw(Q_n^{(3)})=\sum_{j=0}^{n-1}\mathcal R_j$, where $\mathcal R_j$ is the sum of the two largest coefficients of $(1+x+x^2)^j$, with $\mathcal R_0=1$. Symmetry and unimodality give $\mathcal R_j=T_j+C_j(j-1)$ for $j\ge1$. Moreover,
\begin{equation}\label{eq:neighbor-trinomial}
T_{j+1}=T_j+2C_j(j-1),
\end{equation}
because the coefficients of $x^{j-1}$ and $x^{j+1}$ in $(1+x+x^2)^j$ are equal. Hence
\[
\sum_{j=0}^{n-1}\mathcal R_j
 =\sum_{j=0}^{n-1}T_j
  +\frac12\sum_{j=1}^{n-1}(T_{j+1}-T_j)
 =\sum_{j=0}^{n-1}T_j+\frac{T_n-1}{2},
\]
which proves \eqref{eq:ternary-grid-bandwidth}.
\end{proof}

\begin{proof}[Proof of Theorem~\ref{thm:main-k3}]
The finite bound \eqref{eq:main-k3-finite} follows immediately from Lemma~\ref{lem:bandwidth-reduction} and Proposition~\ref{prop:ternary-grid-bandwidth}. For the asymptotic statement, the local central limit theorem gives
\begin{equation}\label{eq:trinomial-asymptotic}
T_n\sim\frac{\sqrt3}{2\sqrt{\pi n}}\cdot 3^n.
\end{equation}
For each fixed $m$, this implies $T_{n-m}/T_n\to3^{-m}$. It also implies $T_j/T_{j+1}\to1/3$, so there is $J$ such that
\begin{equation}\label{eq:trinomial-ratio-bound}
\frac{T_j}{T_{j+1}}\le\frac25
\qquad (j\ge J).
\end{equation}
Consequently, for $n-m\ge J$,
\[
\frac{T_{n-m}}{T_n}\le\left(\frac25\right)^m.
\]
The finitely many terms with $n-m<J$ have total $o(T_n)$, while \eqref{eq:trinomial-ratio-bound} gives a summable majorant for the remaining reversed tail. Dominated convergence therefore yields
\begin{equation}\label{eq:trinomial-tail}
\frac1{T_n}\sum_{j=0}^{n-1}T_j
 =\sum_{m=1}^{n}\frac{T_{n-m}}{T_n}
 \longrightarrow\sum_{m=1}^{\infty}3^{-m}=\frac12.
\end{equation}
Combining \eqref{eq:main-k3-finite}, \eqref{eq:trinomial-asymptotic}, and \eqref{eq:trinomial-tail} gives \eqref{eq:main-k3-asymptotic}.
\end{proof}

\begin{remark}[A bandwidth barrier]\label{rem:bandwidth-barrier}
The right-hand side of \eqref{eq:main-k3-finite} is not merely an isoperimetric estimate; it is the exact unrestricted bandwidth of the ternary grid. Therefore no argument that uses only an arbitrary ordering of $\{0,1,2\}^n$ and the fact that adjacent vertices differ in value by at most $N$ can improve Theorem~\ref{thm:main-k3}. Any stronger lower bound must exploit additional arithmetic rigidity of the ordering $\eps\mapsto\sum_i\eps_i a_i$, such as positivity, integrality, or simultaneous behavior of many threshold cuts.
\end{remark}

\begin{remark}[Small values]\label{rem:small-g3-values}
A direct exhaustive enumeration of $n$-subsets of $[N]$, testing the injectivity in Proposition~\ref{prop:ternary-equivalence}, gives the following values for $n\le4$.
\begin{center}
\begin{tabular}{c@{\qquad}c@{\qquad}c@{\qquad}l}
\toprule
$n$ & $g_3(n)$ & $b_n$ & one extremal witness $A$ \\
\midrule
1 & 1  & 1  & $\{1\}$ \\
2 & 3  & 3  & $\{1,3\}$ \\
3 & 8  & 8  & $\{5,7,8\}$ \\
4 & 22 & 21 & $\{7,19,21,22\}$ \\
\bottomrule
\end{tabular}
\end{center}
The first strict gap at $n=4$ illustrates that an optimal unrestricted grid layout need not be induced by a positive integer linear form. The enumeration is small: one forms the $3^n$ values in \eqref{eq:ternary-map} for each candidate set and checks whether they are distinct.
\end{remark}

\smallskip
\noindent
The elementary construction $A=\{1,3,\ldots,3^{n-1}\}$ shows $g_3(n)\le3^{n-1}$. Thus Theorem~\ref{thm:main-k3} leaves a factor of order $\sqrt n$ between the lower and upper bounds. Eliminating this factor would settle the principal question of Erd\H{o}s and S\'ark\H{o}zy.

\section{Adaptive Overlap Growth}\label{sec:adaptive}

We first isolate a one-shift estimate that applies to arbitrary nonzero directions. Afterward we use positivity and pairwise distinctness to average over all unused generators.

\begin{lemma}[Chain overlap]\label{lem:chain-overlap}
Let $B$ be a finite set of integers and let $h\in\Z\setminus\{0\}$. If $B\cup(B+h)$ contains no nonconstant $k$-term arithmetic progression, where $k\ge3$, then
\begin{equation}\label{eq:chain-overlap}
|B\cap(B+h)|\le |B|-\ceil{\frac{|B|}{k-2}}.
\end{equation}
Consequently,
\begin{equation}\label{eq:chain-expansion}
|B\cup(B+h)|
 \ge |B|+\ceil{\frac{|B|}{k-2}}
 \ge\frac{k-1}{k-2}\cdot|B|.
\end{equation}
\end{lemma}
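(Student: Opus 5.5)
We will prove the lemma by decomposing $B$ into maximal $h$-chains. We may assume $h>0$, since replacing $h$ by $-h$ and $B$ by $B+h$ swaps the roles of $B$ and $B+h$. This leaves $|B\cap(B+h)|$, $|B\cup(B+h)|$ and the progression-free hypothesis unchanged.

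Define a maximal $h$-chain of $B$ as a maximal set of the form $\{x,x+h,\ldots,x+(\ell-1)h\}\subseteq B$. Every element of $B$ lies in exactly one such chain: these are the connected components of the graph joining $b$ to $b+h$. So $B$ is a disjoint union of chains $C_1,\ldots,C_r$ with lengths $\ell_1,\ldots,\ell_r$ and $\sum_j\ell_j=|B|$.

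The first key step bounds the chain lengths. If some chain had $\ell_j\ge k-1$, say $\{x,x+h,\ldots,x+(k-2)h\}\subseteq B$, then $x+(k-1)h\in B+h$. Hence $\{x,\ldots,x+(k-1)h\}$ would be a nonconstant $k$-term progression in $B\cup(B+h)$, which the hypothesis forbids. Therefore $\ell_j\le k-2$ for all $j$, and so $r\ge|B|/(k-2)$; since $r$ is an integer, $r\ge\ceil{|B|/(k-2)}$.

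The second step counts the overlap. An element $b\in B$ lies in $B+h$ exactly when $b-h\in B$, that is, when $b$ is not the first element of its chain. Each chain contributes $\ell_j-1$ such elements, so
\[
|B\cap(B+h)|=\sum_{j=1}^r(\ell_j-1)=|B|-r\le|B|-\ceil{\frac{|B|}{k-2}},
\]
which is \eqref{eq:chain-overlap}.

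Finally, $|B\cup(B+h)|=2|B|-|B\cap(B+h)|\ge|B|+\ceil{|B|/(k-2)}$. Dropping the ceiling gives the bound $\frac{k-1}{k-2}|B|$, which is \eqref{eq:chain-expansion}.

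There is no real obstacle. The only point needing care is the boundary check in the first step: the extension element $x+(k-1)h$ must come from $B+h$, which is why the chain bound is $k-2$ rather than $k-1$.
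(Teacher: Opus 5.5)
Your proposal is correct and follows essentially the same argument as the paper: reduce to $h>0$, partition $B$ into maximal $h$-chains, and bound each chain's length by $k-2$ because a chain of length $k-1$ would extend to a $k$-term progression using one element of $B+h$. The overlap then equals $|B|$ minus the number of chains, which gives both \eqref{eq:chain-overlap} and \eqref{eq:chain-expansion}.
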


\begin{proof}
After translating the union and replacing $h$ by $-h$ if necessary, we may assume $h>0$. Partition $B$ into maximal $h$-chains, that is, sets of the form $\{x,x+h,\ldots,x+(\ell-1)h\}\subseteq B$ that cannot be extended inside $B$ in either direction. Every chain has length at most $k-2$. Indeed, if $x,x+h,\ldots,x+(k-2)h$ all belonged to $B$, then $x+(k-1)h\in B+h$, producing a $k$-term progression in $B\cup(B+h)$. Therefore the number of chains is at least $\ceil{|B|/(k-2)}$. A chain of length $\ell$ contributes exactly $\ell-1$ elements to $B\cap(B+h)$, so the total overlap is $|B|$ minus the number of chains. This proves \eqref{eq:chain-overlap}, and \eqref{eq:chain-expansion} follows from
\[
|B\cup(B+h)|=2|B|-|B\cap(B+h)|.
\]
\end{proof}

\noindent
Define an integer sequence by
\begin{equation}\label{eq:Fk-recurrence}
F_k(0)=1,
\qquad
F_k(j+1)=F_k(j)+\ceil{\frac{F_k(j)}{k-2}}.
\end{equation}

\begin{corollary}[Universal chain recurrence]\label{cor:chain-only-recurrence}
Let
\[
C=a_0+\{0,h_1\}+\cdots+\{0,h_d\}
\]
be a Hilbert cube contained in a set with no nonconstant $k$-term arithmetic progression. If every $h_j$ is nonzero, with repetitions allowed, then
\begin{equation}\label{eq:chain-only-cube-bound}
|C|\ge F_k(d)\ge d_k^d.
\end{equation}
\end{corollary}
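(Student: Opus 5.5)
The plan is to induct on $d$, building the cube one direction at a time and applying Lemma~\ref{lem:chain-overlap} at each step. For $0\le j\le d$ set $C_j=a_0+\{0,h_1\}+\cdots+\{0,h_j\}$, so that $C_0=\{a_0\}$, $C_d=C$, and $C_{j+1}=C_j\cup(C_j+h_{j+1})$. Each $C_{j+1}$ is a subset of $C$. It therefore lies in the ambient set with no nonconstant $k$-term progression, and so it contains none itself.

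We apply Lemma~\ref{lem:chain-overlap} with $B=C_j$ and $h=h_{j+1}\ne0$. The lemma only needs $h$ nonzero, so repeated directions cause no trouble. It gives
\[
|C_{j+1}|\ge |C_j|+\ceil{\frac{|C_j|}{k-2}}.
\]
Next we note that the map $x\mapsto x+\ceil{x/(k-2)}$ is nondecreasing on the positive integers. Then induction from $|C_0|=1=F_k(0)$ gives $|C_j|\ge F_k(j)$ for all $j$. In particular $|C|\ge F_k(d)$.

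For the second inequality, we use $F_k(j+1)\ge F_k(j)+F_k(j)/(k-2)=d_k F_k(j)$. Induction then gives $F_k(d)\ge d_k^d$. Lemma~\ref{lem:chain-overlap} was stated for $k\ge3$, and the same range applies here; for $k=3$ we have $d_k=2$ and the recurrence gives $2^d$.

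Nothing here is a real obstacle. The two points to check are the monotonicity of the recurrence map, which justifies comparing $|C_j|$ with $F_k(j)$ rather than with its real relaxation, and the observation that progression-freeness passes from the ambient set to each partial cube.
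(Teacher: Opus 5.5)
Your proof is correct and follows the paper's argument: you build the cube one direction at a time, apply Lemma~\ref{lem:chain-overlap} to $C_{j+1}=C_j\cup(C_j+h_{j+1})$, induct to get $|C_j|\ge F_k(j)$, and then use $F_k(j+1)\ge d_kF_k(j)$. Your explicit check that $x\mapsto x+\lceil x/(k-2)\rceil$ is nondecreasing is the step the paper leaves implicit when it says ``induction yields''.
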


\begin{proof}
Let $C_j=a_0+\{0,h_1\}+\cdots+\{0,h_j\}$. Since $C_j=C_{j-1}\cup(C_{j-1}+h_j)$ is contained in the progression-free set, Lemma~\ref{lem:chain-overlap} gives
\[
|C_j|\ge |C_{j-1}|+\ceil{\frac{|C_{j-1}|}{k-2}}.
\]
Induction yields $|C_j|\ge F_k(j)$. The second inequality in \eqref{eq:chain-only-cube-bound} follows from $F_k(j+1)\ge d_kF_k(j)$.
\end{proof}

\smallskip
\noindent
Now let $A$ be an $n$-element set of distinct positive integers such that $H(A)$ is $k$-term-progression-free. We order the generators adaptively. After selecting $a_1,\ldots,a_i$, put
\begin{equation}\label{eq:partial-cube}
B_i=H(\{a_1,\ldots,a_i\}),
\qquad
s_i=|B_i|,
\end{equation}
with $B_0=\{0\}$ and $s_0=1$. If $h$ is unused, define
\begin{equation}\label{eq:overlap-definition}
r_h(B_i)=|B_i\cap(B_i+h)|.
\end{equation}
Then
\begin{equation}\label{eq:union-overlap}
|B_i\cup(B_i+h)|=2s_i-r_h(B_i).
\end{equation}

\smallskip
\noindent
The distinct positive shifts permit the following averaging estimate.

\begin{lemma}[Average overlap]\label{lem:average-overlap}
Let $B$ be a finite set of integers and let $U$ be a finite set of distinct positive integers. Then
\begin{equation}\label{eq:sum-overlaps}
\sum_{h\in U}|B\cap(B+h)|\le\binom{|B|}{2}.
\end{equation}
In particular, some $h\in U$ satisfies
\begin{equation}\label{eq:average-overlap}
|B\cap(B+h)|
 \le\floor{\frac{|B|(|B|-1)}{2|U|}}.
\end{equation}
\end{lemma}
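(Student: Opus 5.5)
The plan is a double count of pairs. For each $h\in U$, the quantity $|B\cap(B+h)|$ counts elements $y\in B$ with $y-h\in B$. Equivalently, it counts ordered pairs $(x,y)\in B\times B$ with $y-x=h$. Since $h>0$, every such pair has $x<y$, so it corresponds to an unordered pair $\{x,y\}$ of distinct elements of $B$ whose positive difference is $h$. I will therefore rewrite
\[
\sum_{h\in U}|B\cap(B+h)|=\#\bigl\{(\{x,y\},h): x<y,\ x,y\in B,\ h\in U,\ y-x=h\bigr\}.
\]

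Next I count the same set by first fixing the unordered pair $\{x,y\}$ with $x<y$. Its positive difference $y-x$ is a single integer. Because the elements of $U$ are distinct, at most one $h\in U$ can equal it. Each of the $\binom{|B|}{2}$ unordered pairs therefore contributes at most $1$, which gives \eqref{eq:sum-overlaps}.

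For \eqref{eq:average-overlap}, I assume $U$ is nonempty, since otherwise the statement is vacuous. The minimum of the overlaps over $h\in U$ is at most their average, hence at most $\binom{|B|}{2}/|U|=|B|(|B|-1)/(2|U|)$. The overlap is an integer, so the bound can be replaced by its floor.

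I do not expect a real obstacle. The only care needed is to use positivity, which rules out double counting a pair through $h$ and $-h$ and excludes $h=0$, and distinctness, which ensures at most one $h$ per pair. Both are exactly the hypotheses on $U$.
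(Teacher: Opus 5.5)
Your proof is correct and follows the paper's argument. Each element of $B\cap(B+h)$ corresponds to a pair $x<x+h$ in $B$, and each unordered pair has a unique positive difference, so it is counted for at most one $h\in U$; averaging plus integrality then gives the floor (your remark that $U$ must be nonempty for the second claim is a point the paper leaves implicit).
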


\begin{proof}
An element of $B\cap(B+h)$ corresponds to a pair $x,x+h\in B$. Each unordered pair of distinct elements of $B$ has a unique positive difference, so it is counted for at most one $h\in U$. Summing over $h$ proves \eqref{eq:sum-overlaps}. Since the overlaps are integers, averaging gives the floor in \eqref{eq:average-overlap}.
\end{proof}

\smallskip
\noindent
Combining Lemmas~\ref{lem:chain-overlap} and \ref{lem:average-overlap} yields an integer-valued recurrence.

\begin{theorem}[Adaptive recurrence]\label{thm:adaptive-recurrence}
Let $k\ge3$, and let $A$ be an $n$-element set of distinct positive integers for which $H(A)$ contains no nonconstant $k$-term arithmetic progression. The elements of $A$ can be ordered so that the quantities $s_i$ in \eqref{eq:partial-cube} satisfy, for $0\le i<n$,
\begin{equation}\label{eq:adaptive-recurrence}
s_{i+1}\ge
2s_i-
\min\left\{
 s_i-\ceil{\frac{s_i}{k-2}},
 \floor{\frac{s_i(s_i-1)}{2(n-i)}}
\right\}.
\end{equation}
\end{theorem}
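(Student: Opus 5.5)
The ordering will be built greedily, one generator at a time. We check the recurrence at each step by combining Lemma~\ref{lem:chain-overlap} and Lemma~\ref{lem:average-overlap} through the identity \eqref{eq:union-overlap}.

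Suppose $a_1,\ldots,a_i$ have already been chosen, with $0\le i<n$. Let $U=A\setminus\{a_1,\ldots,a_i\}$ be the unused generators. Then $|U|=n-i\ge1$, and $U$ consists of distinct positive integers. Apply Lemma~\ref{lem:average-overlap} to $B=B_i$ and this $U$. It gives some $h\in U$ with
\[
r_h(B_i)\le\floor{\frac{s_i(s_i-1)}{2(n-i)}}.
\]
We set $a_{i+1}=h$, so that $B_{i+1}=B_i\cup(B_i+h)$. This identity holds because every subset sum of $\{a_1,\ldots,a_{i+1}\}$ either omits $h$ or includes it exactly once.

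Next we use Lemma~\ref{lem:chain-overlap}. We have $B_{i+1}\subseteq H(A)$, since every subset sum of a subset of $A$ is a subset sum of $A$. So $B_i\cup(B_i+h)$ contains no nonconstant $k$-term progression. Also $h\neq 0$ because $h$ is positive, and $k\ge3$. The lemma therefore applies and gives
\[
r_h(B_i)\le s_i-\ceil{\frac{s_i}{k-2}}.
\]
This bound holds for every unused $h$, so in particular for the one we chose. Hence $r_h(B_i)$ is at most the minimum of the two bounds. Substituting into $s_{i+1}=2s_i-r_h(B_i)$ from \eqref{eq:union-overlap} gives \eqref{eq:adaptive-recurrence}, and induction on $i$ completes the ordering.

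There is no serious obstacle. The one point needing care is that the averaging must be applied at each stage to the current set $B_i$, with $U$ the current pool of unused generators. This is why the ordering is adaptive, not fixed in advance. The chain bound, by contrast, holds for any choice of $h$.
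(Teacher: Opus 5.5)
Your proposal is correct and matches the paper's proof essentially verbatim. At each stage you choose, via Lemma~\ref{lem:average-overlap} applied to the current $B_i$ and the current pool of unused generators, an $h$ meeting the averaging bound; you observe that Lemma~\ref{lem:chain-overlap} bounds $r_h(B_i)$ for every unused $h$ because $B_i\cup(B_i+h)\subseteq H(A)$; and you conclude with \eqref{eq:union-overlap}, additionally making explicit the routine checks $B_{i+1}=B_i\cup(B_i+h)$ and $h\neq 0$ that the paper leaves implicit.
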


\begin{proof}
At stage $i$, let $U$ be the set of the $n-i$ unused generators. By Lemma~\ref{lem:average-overlap}, some $h\in U$ satisfies the second overlap bound in \eqref{eq:adaptive-recurrence}. Since $B_i\cup(B_i+h)\subseteq H(A)$, Lemma~\ref{lem:chain-overlap} gives the first overlap bound for every $h\in U$. Choose an $h$ satisfying the average bound as $a_{i+1}$ and apply \eqref{eq:union-overlap}.
\end{proof}

\smallskip
\noindent
For fixed $n$ and $k$, define $L_{k,n}(0)=1$ and, for $0\le i<n$,
\begin{equation}\label{eq:deterministic-recurrence}
L_{k,n}(i+1)=
2L_{k,n}(i)-
\min\left\{
 L_{k,n}(i)-\ceil{\frac{L_{k,n}(i)}{k-2}},
 \floor{\frac{L_{k,n}(i)(L_{k,n}(i)-1)}{2(n-i)}}
\right\}.
\end{equation}

\begin{corollary}[Exact recurrence bound]\label{cor:exact-recurrence-bound}
Under the hypotheses of Theorem~\ref{thm:adaptive-recurrence},
\begin{equation}\label{eq:exact-H-bound}
|H(A)|\ge L_{k,n}(n).
\end{equation}
Consequently,
\begin{equation}\label{eq:exact-g-bound}
g_k(n)\ge
\ceil{\frac{L_{k,n}(n)+\binom{n}{2}-1}{n}}.
\end{equation}
\end{corollary}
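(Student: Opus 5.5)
The plan is to compare the actual sequence $s_i$ from Theorem~\ref{thm:adaptive-recurrence} with the deterministic sequence $L_{k,n}(i)$ by induction on $i$. Then we convert the resulting bound on $|H(A)|=s_n$ into a bound on $N$ through \eqref{eq:interval-upper-H}.

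Define $\phi_i(s)=2s-\min\{s-\ceil{s/(k-2)},\floor{s(s-1)/(2(n-i))}\}$ for integers $s\ge1$. Equivalently,
\[
\phi_i(s)=\max\left\{s+\ceil{\frac{s}{k-2}},\ 2s-\floor{\frac{s(s-1)}{2(n-i)}}\right\}.
\]
Theorem~\ref{thm:adaptive-recurrence} says $s_{i+1}\ge\phi_i(s_i)$, and the recurrence \eqref{eq:deterministic-recurrence} says $L_{k,n}(i+1)=\phi_i(L_{k,n}(i))$. If $\phi_i$ were nondecreasing in $s$, induction from $s_0=L_{k,n}(0)=1$ would give $s_i\ge L_{k,n}(i)$ for all $i$.

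The main obstacle is that $\phi_i$ need not be monotone. The second branch $2s-\floor{s(s-1)/(2m)}$ with $m=n-i$ eventually decreases in $s$. Only the first branch, $s+\ceil{s/(k-2)}$, is clearly nondecreasing. So I would not prove monotonicity outright. Instead I would prove the weaker statement that suffices: if $s\ge t\ge1$ are integers, then $\phi_i(s)\ge\phi_i(t)$.

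1. If $\phi_i(t)$ is attained by the first branch, then $\phi_i(s)\ge s+\ceil{s/(k-2)}\ge t+\ceil{t/(k-2)}=\phi_i(t)$, and we are done.
2. Otherwise $\phi_i(t)=2t-\floor{t(t-1)/(2m)}$, which is at most $2t$. So it suffices to show $\phi_i(s)\ge 2t$, or to handle the case where $s$ is close to $t$ directly.
3. The map $g(s)=2s-s(s-1)/(2m)$ is a concave quadratic, increasing for $s\le 2m+\tfrac12$. Below that threshold the second branch (with the floor, which changes values by integers) is nondecreasing on integers, which handles $s$ in that range.
4. Above the threshold, I would compare with the first branch: $s+\ceil{s/(k-2)}\ge s>2m$. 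I would also observe that the second branch can only be active at $t$ when $t(t-1)/(2m)\lesssim t-t/(k-2)$, that is, when $t\lesssim 2m$. This should give $\phi_i(s)\ge s\ge 2t-\floor{t(t-1)/(2m)}$ once $s$ exceeds roughly $2m$.

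This case analysis is the delicate step. If it fails in some corner, the fallback is to define $L$ with a running maximum, or to argue by minimality along the actual sequence. Since the statement fixes \eqref{eq:deterministic-recurrence}, I will aim to make the monotonicity argument above work.

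Given $s_n\ge L_{k,n}(n)$, note $H(A)\subseteq[0,\sum_{a\in A}a]$. Then \eqref{eq:interval-upper-H} gives $L_{k,n}(n)\le nN-\binom n2+1$. Solving for $N$ and using integrality of $N$ yields \eqref{eq:exact-g-bound}, applied with $N=g_k(n)$ and an extremal set $A$.
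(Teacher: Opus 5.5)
Your route is the paper's: show the one-step update map is nondecreasing, induct from $s_0=L_{k,n}(0)=1$, then invert \eqref{eq:interval-upper-H}. Two points need fixing, though neither changes the approach.

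First, your premise is mistaken. The ``weaker statement'' $s\ge t\Rightarrow\phi_i(s)\ge\phi_i(t)$ is exactly the statement that $\phi_i$ is nondecreasing. In fact $\phi_i$ is monotone: only its second branch fails to be, and only for $s\ge 2m+1$, where the first branch dominates. The paper proves this monotonicity outright, so no running maximum or other fallback is needed.

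Second, Step 4 does not close as written. Knowing that the second branch can win at $t$ only when $t\le 2m$ gives just $\phi_i(t)\le 2t$. That bound can be close to $4m$, so it does not give $\phi_i(t)\le s$ for $s$ near $2m$. What you need is the global bound $2t-\lfloor t(t-1)/(2m)\rfloor\le 2m+1$. This follows from your Step 3 monotonicity on $[t,2m]$ together with the value $2m+1$ at $t=2m$. Equivalently, as the paper puts it, for $s\ge 2m+1$ one has $\lfloor s(s-1)/(2m)\rfloor\ge s$, so the second branch is at most $s$ there. Combine this with $\phi_i(s)\ge s+\lceil s/(k-2)\rceil\ge 2m+2$ for $s\ge 2m+1$, and the argument is finished.

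In Step 3, make the floor precise using $2s-\lfloor x\rfloor=\lceil 2s-x\rceil$, since the ceiling preserves monotonicity. Alternatively, use the paper's estimate $\lfloor s(s+1)/(2m)\rfloor-\lfloor s(s-1)/(2m)\rfloor\le 2$ for $s\le 2m$.
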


\begin{proof}
For $m\ge1$, write the update map in \eqref{eq:deterministic-recurrence} as
\[
\Psi_{k,m}(s)=
\max\left\{
 s+\ceil{\frac{s}{k-2}},
 2s-\floor{\frac{s(s-1)}{2m}}
\right\}.
\]
This map is nondecreasing on the positive integers. The first branch is increasing. For $1\le s\le2m$, the second branch is nondecreasing because
\[
\floor{\frac{s(s+1)}{2m}}-
\floor{\frac{s(s-1)}{2m}}\le2.
\]
For $s\ge2m+1$, its quadratic subtraction is at least $s$, so the first branch dominates; the transition at $s=2m$ is also nondecreasing. Thus Theorem~\ref{thm:adaptive-recurrence} and induction give $s_i\ge L_{k,n}(i)$ for every $i$. Equation \eqref{eq:exact-g-bound} follows from \eqref{eq:interval-upper-H}.
\end{proof}

\begin{lemma}[Logistic growth]\label{lem:logistic-growth}
Let $0\le\ell\le n$, set $M=n-\ell+1$, and suppose $2^\ell\le2M$. Under the hypotheses of Theorem~\ref{thm:adaptive-recurrence}, the first $\ell$ generators can be chosen so that
\begin{equation}\label{eq:logistic-growth}
s_\ell\ge
2M\left[1-\left(1-\frac1{2M}\right)^{2^\ell}\right].
\end{equation}
\end{lemma}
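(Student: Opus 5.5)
The plan is to run the adaptive ordering of Theorem~\ref{thm:adaptive-recurrence} for the first $\ell$ steps, keeping only the averaging branch of the minimum. Then compare the resulting sequence $s_i$ with the exact logistic iteration that the right-hand side of \eqref{eq:logistic-growth} solves. The chain branch from Lemma~\ref{lem:chain-overlap} is not needed here. Dropping it only weakens \eqref{eq:adaptive-recurrence}, since replacing the minimum by one of its entries can only lower the bound.

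First, for $0\le i<\ell$ we have $n-i\ge n-\ell+1=M$. Bounding the floor from above by the real number it floors, \eqref{eq:adaptive-recurrence} gives
\[
s_{i+1}\ \ge\ 2s_i-\frac{s_i(s_i-1)}{2(n-i)}\ \ge\ 2s_i-\frac{s_i^2}{2M}=:\varphi(s_i).
\]
Second, define $y_i=2M\bigl[1-u_i\bigr]$ with $u_i=(1-\tfrac1{2M})^{2^i}$, so that $y_0=1=s_0$. Writing $y=2M(1-u)$, a one-line computation gives $\varphi(y)=2M(1-u^2)$. Since $u_{i+1}=u_i^2$, we obtain $y_{i+1}=\varphi(y_i)$ exactly, and $0<y_i\le 2M$ for all $i$.

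Third, prove $s_i\ge y_i$ by induction on $i\le\ell$. The map $\varphi$ is increasing on $[0,2M]$, since $\varphi'(s)=2-s/M\ge0$ there. Suppose $s_i\ge y_i$. If $s_i\le 2M$, then monotonicity gives $s_{i+1}\ge\varphi(s_i)\ge\varphi(y_i)=y_{i+1}$. If instead $s_i>2M$, we cannot use $\varphi$, because it decreases beyond $2M$. In that case we use the trivial inclusion $B_{i+1}\supseteq B_i$, which gives $s_{i+1}\ge s_i>2M\ge y_{i+1}$. Taking $i=\ell$ yields \eqref{eq:logistic-growth}.

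The only delicate point is this non-monotonicity of the quadratic map beyond its vertex. The case split above, which relies on $y_i\le 2M$ and on the monotonicity of the $s_i$, is what handles it. The hypothesis $2^\ell\le 2M$ does not seem to be needed for the inequality itself; I expect it is imposed so that the bound stays in the regime where it is close to $2^\ell$ for later use. If any integrality issue arises from the floor, it only helps, since the floor is at most the real quotient.
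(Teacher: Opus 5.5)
Your proof is correct and follows the paper's argument. You discard the chain branch, bound the averaging term by $s_i^2/(2M)$, and compare with the monotone map $y\mapsto 2y-y^2/(2M)$, whose exact orbit from $1$ is $2M\bigl[1-(1-\tfrac{1}{2M})^{2^i}\bigr]$. The one difference is that the paper stays in the monotone range via $s_i\le 2^i\le 2^\ell\le 2M$. Your case split instead uses $s_{i+1}\ge s_i$ when $s_i>2M$; this is valid, so you are right that the hypothesis $2^\ell\le 2M$ is not needed for the inequality itself.
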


\begin{proof}
For $0\le i<\ell$, there are at least $M$ unused generators. Keeping only the averaging term in \eqref{eq:adaptive-recurrence}, dropping the floor, and using $s_i(s_i-1)\le s_i^2$ gives
\begin{equation}\label{eq:logistic-step}
s_{i+1}\ge2s_i-\frac{s_i^2}{2M}.
\end{equation}
Put $y_i=s_i/(2M)$. Since $s_i\le2^i\le2^\ell\le2M$, we have $0\le y_i\le1$, and \eqref{eq:logistic-step} becomes
\[
y_{i+1}\ge1-(1-y_i)^2.
\]
The map $y\mapsto1-(1-y)^2$ is increasing on $[0,1]$. Starting from $y_0=1/(2M)$ and iterating gives
\[
y_\ell\ge1-\left(1-\frac1{2M}\right)^{2^\ell},
\]
which is \eqref{eq:logistic-growth}.
\end{proof}

\begin{proof}[Proof of Theorem~\ref{thm:main-lower-general}]
Let $\ell=\floor{\log_2n}$ and $M=n-\ell+1$. For every $n\ge2$, one has $2^\ell\le n\le2M$, so Lemma~\ref{lem:logistic-growth} applies. After the first $\ell$ steps, use the chain expansion \eqref{eq:chain-expansion} at each of the remaining $n-\ell$ steps. With $d_k$ as in \eqref{eq:dk-lambda}, this gives
\begin{equation}\label{eq:H-lower-closed}
|H(A)|\ge
2M\left[1-\left(1-\frac1{2M}\right)^{2^\ell}\right]
d_k^{n-\ell}.
\end{equation}
The bracketed factor is bounded below by a positive absolute constant, because $2^\ell/M$ stays between two positive constants. Also $M\asymp n$ and
\begin{equation}\label{eq:dk-ell}
d_k^{-\ell}\asymp_k n^{-\log_2d_k}=n^{-\lambda_k}.
\end{equation}
Hence
\begin{equation}\label{eq:H-lower-asymptotic}
|H(A)|\gg_k d_k^n n^{1-\lambda_k}.
\end{equation}
If $A\subseteq[N]$, then \eqref{eq:interval-upper-H} and \eqref{eq:H-lower-asymptotic} imply
\[
N\gg_k d_k^n n^{-\lambda_k},
\]
which proves \eqref{eq:main-general-lower}. Taking $n$th roots gives \eqref{eq:liminf-general-lower}.
\end{proof}

\smallskip
\noindent
The closed-form argument also gives an explicit finite inequality.

\begin{corollary}[Closed-form finite bound]\label{cor:finite-general-lower}
Let $k\ge3$, let $\ell=\floor{\log_2n}$, and let $M=n-\ell+1$. Then
\begin{equation}\label{eq:finite-general-lower}
g_k(n)\ge
\ceil{
\frac{
2M\left[1-\left(1-\frac1{2M}\right)^{2^\ell}\right]d_k^{n-\ell}
+\binom{n}{2}-1
}{n}
}.
\end{equation}
\end{corollary}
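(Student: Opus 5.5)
The plan is to combine Lemma~\ref{lem:logistic-growth} with Lemma~\ref{lem:chain-overlap} and then convert through \eqref{eq:interval-upper-H}. The conversion works for every $k\ge3$ and every $n\ge1$, not only $k\ge4$.

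First I dispose of $n=1$ separately. There $\ell=0$ and $M=2$, so the bracket equals $1-(3/4)=1/4$, and the right side is $\ceil{(1+0-1)/1}=0$. Since $g_k(1)\ge1$, the bound is trivial.

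For $n\ge2$, I check the hypothesis $2^\ell\le 2M$ of Lemma~\ref{lem:logistic-growth} exactly as in the proof of Theorem~\ref{thm:main-lower-general}. We have $2^\ell\le n$. Also $\ell\le n-1$, so $M=n-\ell+1\ge 2$, and moreover $2M=2n-2\ell+2\ge n$, because $n\ge 2\ell-2$ holds for all $n\ge2$ with $\ell=\floor{\log_2 n}$.

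Now fix an $n$-element $A\subseteq[N]$ with $H(A)$ $k$-term-progression-free, where $N=g_k(n)$. Lemma~\ref{lem:logistic-growth} orders the first $\ell$ generators so that $s_\ell$ is at least the bracketed quantity times $2M$. The remaining $n-\ell$ generators can be added in any order. At each such step, $B_i\cup(B_i+a_{i+1})=B_{i+1}\subseteq H(A)$ is progression-free, so \eqref{eq:chain-expansion} gives $s_{i+1}\ge d_k s_i$. Iterating yields \eqref{eq:H-lower-closed}, that is,
\[
|H(A)|\ge 2M\left[1-\left(1-\frac1{2M}\right)^{2^\ell}\right]d_k^{\,n-\ell}.
\]

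Finally, \eqref{eq:interval-upper-H} gives $nN\ge |H(A)|+\binom n2-1$. Dividing by $n$ and using that $N$ is an integer produces the ceiling in \eqref{eq:finite-general-lower}.

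The only point needing care, and the one I expect to be the main obstacle, is the hypothesis check $2^\ell\le 2M$ for small $n$ together with the degenerate case $n=1$. The rest is a direct chaining of earlier results. Nothing in the argument uses $k\ge4$: for $k=3$ we have $d_3=2$, and the chain lemma still applies.
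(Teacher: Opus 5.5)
Your proposal is correct and follows the paper's proof: the paper likewise applies Lemma~\ref{lem:logistic-growth} for the first $\ell$ steps and the chain expansion \eqref{eq:chain-expansion} for the remaining $n-\ell$ steps to get \eqref{eq:H-lower-closed}, then converts that bound through \eqref{eq:interval-upper-H}. One small slip: at $n=1$ you dropped the factor $d_k^{n-\ell}=d_k$, so the right side is $\ceil{d_k-1}=1$, not $0$. The bound still holds because $g_k(1)\ge1$, and in fact no separate case is needed, since $2^\ell=1\le 2M=4$ there and the general argument applies directly.
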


\begin{proof}
Combine \eqref{eq:H-lower-closed} with \eqref{eq:interval-upper-H}.
\end{proof}

\begin{remark}[Density refinement]\label{rem:density-refinement}
Replacing the elementary inequality $R_k(L)\ge L$ by any stronger quantitative estimate in \eqref{eq:density-transfer-g}, with either $L=L_{k,n}(n)$ from \eqref{eq:exact-H-bound} or the closed-form value from \eqref{eq:H-lower-closed}, yields a corresponding refinement. For each fixed $k$, current quantitative forms of Szemer\'edi's theorem alter polynomial or subexponential factors but preserve the exponential base $d_k$.
\end{remark}

\smallskip
\noindent
As $k\to\infty$,
\begin{equation}\label{eq:dk-large-k}
\log d_k=\frac1k+O\!\left(\frac1{k^2}\right),
\qquad
\lambda_k=\frac1{k\log2}+O\!\left(\frac1{k^2}\right).
\end{equation}
Thus Theorem~\ref{thm:main-lower-general} has logarithmic exponential rate $(1+o(1))/k$. Reaching the conjectural scale $\log k/k$ likely requires a mechanism that groups many generators at once rather than controlling one shift at a time.

\section{Distinct-Generator Digit Constructions}\label{sec:upper}

We begin with the progression-free digit language.

\begin{lemma}[Restricted digits]\label{lem:restricted-digits}
Let $p$ be prime, let $k\ge3$, and let $0\le\tau\le\min\{p,k\}-2$. The set of nonnegative integers whose base-$p$ digits all belong to $\{0,1,\ldots,\tau\}$ contains no nonconstant $k$-term arithmetic progression.
\end{lemma}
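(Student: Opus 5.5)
Suppose $x,x+d,\ldots,x+(k-1)d$ all lie in the digit set $S_\tau$, with $d>0$; the case $d<0$ reduces to this by reversing the progression. Write $d=p^v d'$ with $p\nmid d'$. The plan is to show that the base-$p$ digit in position $v$ cycles through too many residues as we step along the progression, forcing some term to have a forbidden digit.

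Dividing by $p^v$ is not directly legitimate, since $x$ need not be divisible by $p^v$. However, adding a multiple of $p^v$ never alters the lowest $v$ digits. Hence all terms $x+jd$ share the same residue $r$ modulo $p^v$, with $0\le r<p^v$. Set $x+jd=r+p^v y_j$, where $y_j=y_0+jd'$. The digits of $x+jd$ in positions $\ge v$ are exactly the digits of $y_j$, because the low part $r<p^v$ causes no carry. So each $y_j$ also has all digits in $\{0,\ldots,\tau\}$, and in particular its last digit $y_j \bmod p$ lies in $\{0,\ldots,\tau\}$.

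Now consider the residues $y_0+jd' \pmod p$ for $j=0,1,\ldots,m-1$, where $m=\min\{p,k\}$. Since $p$ is prime and $p\nmid d'$, the element $d'$ is invertible modulo $p$. Therefore these $m\le p$ residues are pairwise distinct, and there are $m$ of them. Every one of them must lie in a set of size $\tau+1\le m-1$, which contradicts the pigeonhole principle. The progression needs $m\le k$ terms for this, which holds by the definition of $m$.

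The only delicate step is the reduction past the lowest $v$ digits. One must check that the lowest $v$ digits are constant along the progression, and that the higher digits of $r+p^vy$ are exactly the digits of $y$. Both facts follow from $0\le r<p^v$. Primality enters only through the invertibility of $d'$ modulo $p$; the paper's base-$4$ example shows this step genuinely fails without it.
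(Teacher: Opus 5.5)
Your proof is correct and follows the same route as the paper: you look at the base-$p$ digit in position $v_p(d)$, note that along the progression it runs through the residues $c+jd' \pmod p$, and conclude by invertibility of $d'$ modulo $p$ plus pigeonhole. Your two refinements are presentational rather than substantive: taking $m=\min\{p,k\}$ merges the paper's two cases ($k\le p$ and $k>p$) into one, and the decomposition $x+jd=r+p^v y_j$ spells out the no-carry step that the paper leaves implicit.
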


\begin{proof}
Suppose $x_j=x_0+jd$ for $0\le j<k$ and $d>0$. Let $s=v_p(d)$ and let $u=d/p^s\not\equiv0\pmod p$. Looking at the $s$th base-$p$ digit modulo $p$, the digits of $x_j$ are congruent to $c+ju\pmod p$ for some $c$. If $k\le p$, the first $k$ residues are distinct, while the allowed alphabet has size $\tau+1\le k-1$. If $k>p$, the first $p$ residues cover all of $\F_p$, while the alphabet has size $\tau+1\le p-1$. Both cases are impossible.
\end{proof}

\smallskip
\noindent
The following elementary graph lemma makes the construction finite and explicit.

\begin{lemma}[Nearly-regular graphs]\label{lem:nearly-regular-graph}
Let $D\ge0$ and $r\ge D+1$ be integers. There is a simple graph on $r$ vertices with maximum degree at most $D$ and exactly
\begin{equation}\label{eq:graph-edge-count}
\floor{\frac{Dr}{2}}
\end{equation}
edges.
\end{lemma}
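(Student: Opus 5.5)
I will give an explicit construction based on circulant graphs on the vertex set $\Z_r=\{0,1,\ldots,r-1\}$, splitting into cases according to the parity of $D$.

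\emph{Even $D$.} Write $D=2t$. Since $r\ge D+1=2t+1$, the connection set $S=\{\pm1,\ldots,\pm t\}\subseteq\Z_r$ consists of $2t$ distinct nonzero residues. This is because $j\not\equiv -j'\pmod r$ whenever $1\le j,j'\le t$, as $j+j'\le 2t<r$. The Cayley graph $\mathrm{Cay}(\Z_r,S)$ is therefore simple and $D$-regular. It has $Dr/2=\floor{Dr/2}$ edges.

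\emph{Odd $D$.} Write $D=2t+1$, so that $r\ge 2t+2$. I start from the $2t$-regular graph above, which has $tr$ edges, and add a matching that avoids existing edges. If $r$ is even, add the antipodal edges $\{x,x+r/2\}$. These are new because $r/2>t$ (from $r\ge2t+2$), and they form a perfect matching of $r/2$ edges, giving a $D$-regular graph with $tr+r/2=Dr/2$ edges. If $r$ is odd, then $\floor{Dr/2}=tr+(r-1)/2$, so I need a matching of $(r-1)/2$ edges using chords of length greater than $t$. I will use the edges $\{x,x+(r-1)/2\}$ for $x=0,1,\ldots,(r-3)/2$. These are pairwise disjoint, since their endpoints are $0,\ldots,(r-3)/2$ and $(r-1)/2,\ldots,r-2$, and together they cover every vertex except $r-1$. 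Their circular length is $\min\{(r-1)/2,(r+1)/2\}=(r-1)/2$. Since $r$ is odd and $r\ge 2t+2$, in fact $r\ge2t+3$, so $(r-1)/2\ge t+1>t$ and these edges are new. The maximum degree is then at most $2t+1=D$.

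\emph{Edge case.} The case $D=0$ is trivial, giving the empty graph.

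An alternative route is a degree-sequence argument (Erd\H{o}s--Gallai), or greedily adding edges and swapping whenever no edge can be added. The explicit construction is cleaner, and the only delicate point is the odd-$D$, odd-$r$ chord-length check done above, which uses $r\ge D+1$ together with parity.
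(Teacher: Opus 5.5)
Your proposal is correct and follows essentially the same circulant construction as the paper: cyclic distances $1,\ldots,\floor{D/2}$, plus the antipodal perfect matching when $D$ is odd and $r$ is even, plus a maximum matching of step-$(r-1)/2$ chords when $Dr$ is odd. Your explicit matching $\{x,x+(r-1)/2\}$ for $0\le x\le (r-3)/2$ is simply a concrete instance of the paper's ``maximum matching from the odd cycle of step $(r-1)/2$,'' and your chord-length checks match the paper's use of $r\ge D+2$ in that case.
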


\begin{proof}
The case $D=0$ is trivial. Identify the vertices with $\Z/r\Z$. If $D$ is even, join each vertex to the vertices at cyclic distances $1,\ldots,D/2$; this is a $D$-regular graph. If $D$ is odd and $r$ is even, use cyclic distances $1,\ldots,(D-1)/2$ together with the antipodal perfect matching; this is again $D$-regular. These two cases cover $Dr$ even.

\bigskip
\noindent
If $Dr$ is odd, then both $D$ and $r$ are odd, and $r\ge D+2$. Begin with the $(D-1)$-regular circulant graph using cyclic distances $1,\ldots,(D-1)/2$. The edges of cyclic step $(r-1)/2$ form an odd cycle of length $r$ and are disjoint from the existing edge set. Add a maximum matching from that cycle. This adds $(r-1)/2$ edges, leaves one vertex of degree $D-1$, and gives every other vertex degree $D$. The resulting number of edges is
\[
\frac{(D-1)r}{2}+\frac{r-1}{2}=\frac{Dr-1}{2},
\]
as required.
\end{proof}

\begin{theorem}[Graph construction]\label{thm:graph-construction}
Let $p\ge3$ be prime, let $k\ge3$, and put
\begin{equation}\label{eq:digit-capacity}
q=\min\{p,k\}-1,
\qquad
\tau=q-1=\min\{p,k\}-2.
\end{equation}
For every integer $r\ge q-1$, there is a set $A_r$ of distinct positive integers such that
\begin{equation}\label{eq:graph-size}
|A_r|=\floor{\frac{qr}{2}},
\end{equation}
\begin{equation}\label{eq:graph-height}
\max A_r<2p^{r-1},
\end{equation}
and $H(A_r)$ contains no nonconstant $k$-term arithmetic progression.
\end{theorem}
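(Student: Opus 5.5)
We take $D=q-2=\tau-1$ in Lemma~\ref{lem:nearly-regular-graph}, combine the edges of the resulting graph with single powers of $p$, and check that every subset sum has base-$p$ digits in $\{0,\ldots,\tau\}$, so that Lemma~\ref{lem:restricted-digits} applies. The count works because $r+\floor{(q-2)r/2}=\floor{qr/2}$, and the hypothesis $r\ge q-1=D+1$ is exactly what Lemma~\ref{lem:nearly-regular-graph} needs. (The naive choice of a graph with degree $q$ and edge-generators only would create digits up to $q=\tau+1$, which Lemma~\ref{lem:restricted-digits} does not allow.)

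\emph{Construction.} Apply Lemma~\ref{lem:nearly-regular-graph} with $D=q-2\ge0$ on the vertex set $\{0,1,\ldots,r-1\}$. This gives a simple graph $G$ with maximum degree at most $q-2$ and exactly $\floor{(q-2)r/2}$ edges. Define
\[
A_r=\{p^v:0\le v\le r-1\}\cup\{p^u+p^v:uv\in E(G)\}.
\]

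\emph{Distinctness and size.} Every element of $A_r$ has base-$p$ digits in $\{0,1\}$, and $p\ge3$, so distinct digit patterns give distinct integers. The singletons have one nonzero digit and the edge-generators have two. Distinct edges are distinct unordered pairs, so they give distinct patterns. Hence
\[
|A_r|=r+\floor{\frac{(q-2)r}{2}}=\floor{\frac{qr}{2}},
\]
which is \eqref{eq:graph-size}. In the degenerate case $q=2$ the graph has no edges and $A_r$ is just the powers of $p$. The largest element is at most $p^{r-1}+p^{r-2}<2p^{r-1}$, which gives \eqref{eq:graph-height}; when $G$ has no edges it is $p^{r-1}$.

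\emph{Progression-freeness.} Choose a subset $S\subseteq A_r$ and add up its elements position by position, without carrying. At position $v$ the coefficient equals $[p^v\in S]$ plus the number of chosen edges incident to $v$. This is at most $1+\deg_G(v)\le1+(q-2)=\tau$, and $\tau\le p-2<p$. So no carries occur, and this coefficient vector is the actual base-$p$ expansion of $\sum S$. Therefore $H(A_r)$ lies inside the set of nonnegative integers whose base-$p$ digits all belong to $\{0,\ldots,\tau\}$. Since $\tau=\min\{p,k\}-2$, Lemma~\ref{lem:restricted-digits} shows this digit set, and hence $H(A_r)$, contains no nonconstant $k$-term progression.

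The step that needs the most care is the carry-free check. Its whole point is the degree budget: one unit of digit capacity at each position goes to the singleton $p^v$, and the remaining $\tau-1$ units go to incident edges. This is why the graph has degree $q-2$ and not $q-1$.
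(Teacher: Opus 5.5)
Your proposal is correct and matches the paper's proof: the same choice $D=q-2$ in Lemma~\ref{lem:nearly-regular-graph}, the same generators (single powers $p^v$ together with edge sums $p^u+p^v$), and the same carry-free digit bound $1+\deg_G(v)\le\tau<p$ feeding into Lemma~\ref{lem:restricted-digits}. Your extra checks (distinctness via $\{0,1\}$-digit patterns and the degenerate edgeless cases) are fine and slightly more explicit than the paper's.
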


\begin{proof}
Apply Lemma~\ref{lem:nearly-regular-graph} with $D=q-2=\tau-1$ to obtain a simple graph $G$ on vertex set $\{0,1,\ldots,r-1\}$ having maximum degree at most $\tau-1$ and exactly $\floor{(\tau-1)r/2}$ edges. Define
\begin{equation}\label{eq:graph-generators}
A_r=\{p^i:0\le i<r\}
 \cup\{p^i+p^j:\{i,j\}\in E(G)\}.
\end{equation}
All elements in \eqref{eq:graph-generators} are distinct. Moreover,
\[
|A_r|=r+\floor{\frac{(\tau-1)r}{2}}
 =\floor{\frac{(\tau+1)r}{2}}
 =\floor{\frac{qr}{2}}.
\]
In any subset sum of $A_r$, the coefficient of $p^i$ is at most
\[
1+\deg_G(i)\le\tau<p.
\]
Therefore no carry occurs, and every base-$p$ digit lies in $\{0,1,\ldots,\tau\}$. Lemma~\ref{lem:restricted-digits} shows that $H(A_r)$ is $k$-term-progression-free. Finally, every generator is less than $2p^{r-1}$, proving \eqref{eq:graph-height}.
\end{proof}

\begin{proof}[Proof of Theorem~\ref{thm:main-upper-general}]
Let $q=q_{p,k}$ and $r=\rho_{p,k}(n)$ as in \eqref{eq:q-rho-definition}. Then $r\ge q-1$, so Theorem~\ref{thm:graph-construction} applies, and
\[
|A_r|=\floor{\frac{qr}{2}}\ge n.
\]
Delete excess generators if necessary. Progression-freeness is preserved because the new subset-sum set is contained in the old one. Equation \eqref{eq:graph-height} gives
\[
g_k(n)<2p^{r-1}=2p^{\rho_{p,k}(n)-1},
\]
which proves \eqref{eq:finite-upper-general}. Since
\[
\rho_{p,k}(n)=\frac{2n}{\min\{p,k\}-1}+O_{p,k}(1),
\]
taking $n$th roots and minimizing over primes gives \eqref{eq:limsup-upper-general}.
\end{proof}

\begin{proof}[Proof of Corollary~\ref{cor:large-k-rates}]
The lower estimate follows from \eqref{eq:liminf-general-lower} and the first expansion in \eqref{eq:dk-large-k}. By the prime number theorem, there is a prime $p=(1+o(1))k$. Substituting this prime into \eqref{eq:limsup-upper-general} gives
\[
\log U_k\le\frac{2\log p}{\min\{p,k\}-1}
 =(2+o(1))\cdot\frac{\log k}{k}.
\]
This proves \eqref{eq:large-k-rates}.
\end{proof}

\section{Discussion and Open Problems}\label{sec:discussion}

For three-term progressions, Theorem~\ref{thm:main-k3} and the elementary powers-of-three construction give
\begin{equation}\label{eq:k3-current-gap}
\left(\frac{\sqrt3}{2\sqrt\pi}+o(1)\right)\frac{3^n}{\sqrt n}
 \le g_3(n)\le3^{n-1}.
\end{equation}
The central question is whether the factor $\sqrt n$ can be removed from the lower bound. Remark~\ref{rem:bandwidth-barrier} shows that ordinary ternary-grid bandwidth is exhausted exactly. A successful argument must distinguish integer-linear layouts from arbitrary optimal bandwidth layouts. The strict gap $g_3(4)=22>b_4=21$ in Remark~\ref{rem:small-g3-values} shows that such a distinction already occurs in low dimension. One possible route is a stability theorem showing that a near-optimal layout has many threshold sets close to simplicial initial segments, followed by an arithmetic obstruction to realizing all of those cuts with one positive integer linear form.

\bigskip
\noindent
For general $k$, Corollary~\ref{cor:large-k-rates} leaves a logarithmic gap between the lower and upper exponential rates. The universal chain recurrence already gives the base $(k-1)/(k-2)$ without positivity or distinctness; pairwise distinct positive directions improve only the polynomial factor through overlap averaging. Restricted-digit examples with repeated directions suggest that blocks of approximately $k$ directions may create $k-1$ independent local states and hence a rate of order $\log k/k$. It is therefore natural to ask whether there is an absolute constant $c>0$ such that every $k$-term-progression-free subset-sum set generated by $n$ distinct positive integers satisfies
\begin{equation}\label{eq:target-cube-growth}
|H(A)|\ge\exp\!\left(c\cdot\frac{n\log k}{k}\right).
\end{equation}
Even a proof of \eqref{eq:target-cube-growth} with a small absolute $c$ would attain the correct large-$k$ scale and qualitatively improve Theorem~\ref{thm:main-lower-general}.

\section*{Acknowledgements}
The author used  GPT-5.5 as a research assistant during the preparation of this paper. The author was responsible for the original observation leading to the $3^n/\sqrt n$ asymptotic lower bound in the three-term case. The sharpening of the leading constant was made after an AI-assisted literature review identified the relevant bandwidth formula for the ternary grid. For the general $k$ bounds, the author was responsible for the local one-shift chain idea giving the growth factor $\bigl((k-1)/(k-2)\bigr)^n$. The polynomial refinement obtained by averaging overlaps over the unused distinct generators in the initial steps was suggested by GPT-5.5. The author checked the resulting arguments and is responsible for the final statements, proofs, and any errors.

\end{document}